\documentclass[12pt,leqno,a4paper]{amsart}

\usepackage{amssymb,enumerate, hyperref}
\usepackage{amsmath,amscd,amsthm, tikz-cd}
\usepackage{mathrsfs}
\usepackage{txfonts}
\usepackage{hyperref,cite}
\usepackage{xcolor}
\usepackage{framed}

\newcommand{\Aut}{\operatorname{Aut}}
\newcommand{\Out}{\operatorname{Out}}
\newcommand{\Irr}{\operatorname{Irr}}
\newcommand{\IBr}{\operatorname{IBr}}
\newcommand{\Lin}{\operatorname{Lin}}

\newcommand{\C}{\operatorname{C}}
\newcommand{\N}{\operatorname{N}}
\newcommand{\Z}{\operatorname{Z}}
\newcommand{\rO}{\operatorname{O}}
\newcommand{\Dec}{\operatorname{Dec}}

\newcommand{\Syl}{\operatorname{Syl}}
\newcommand{\bG}{\mathbf{G}}

\newcommand{\tbG}{\widetilde{\mathbf{G}}}
\newcommand{\FF}{\mathbb{F}}
\newcommand{\ZZ}{\mathbb{Z}}

\newcommand{\tB}{\widetilde{B}}
\newcommand{\cB}{\mathcal{B}}
\newcommand{\cC}{\mathcal{C}}
\newcommand{\cO}{\mathcal{O}}
\newcommand{\fS}{\mathfrak{S}}
\newcommand{\cE}{\mathscr{E}}
\newcommand{\ty}[1]{\mathsf{#1}}

\theoremstyle{theorem}
\newtheorem{thm}{Theorem}[section]
\newtheorem{lem}[thm]{Lemma}
\newtheorem{prop}[thm]{Proposition}
\newtheorem{cor}[thm]{Corollary}

\newtheorem{conj}[thm]{Conjecture}
\newtheorem{cond}[thm]{Condition}

\theoremstyle{definition}
\newtheorem{defn}[thm]{Definition}

\newtheorem*{rem}{Remark}

\begin{document}

\title[Brauer $A(\infty)$ condition]{The Brauer $A(\infty)$ condition and Navarro's conjecture on Brauer characters under coprime actions}

\author{Zhicheng Feng}
\address[Z. Feng]{Shenzhen International Center for Mathematics and Department of Mathematics, Southern University of Science and Technology, Shenzhen 518055, China}
\makeatletter
\email{fengzc@sustech.edu.cn}
\makeatother

\author{Lin Wu}
\address[L. Wu]{Shenzhen International Center for Mathematics and Department of Mathematics, Southern University of Science and Technology, Shenzhen 518055, China}
\makeatletter
\email{12431015@mail.sustech.edu.cn}
\makeatother

\thanks{The authors gratefully acknowledge financial support by NSFC (12622101 and 12431001).}

\begin{abstract}
In this paper, we prove the Brauer $A(\infty)$ condition for good non-defining primes. Consequently, we show a conjecture of Navarro on coprime actions and Brauer characters for primes larger than 3.
\end{abstract}

\keywords{Basic sets, Decomposition matrices, equivariant bijections, hypoelementary groups, coprime actions, the inductive Brauer--Glauberman condition}

\subjclass[2020]{20C33, 20C20}

\date{\today}

\maketitle

\section{Introduction}

For $G$ the universal covering of a simple group $S$ of Lie type, it is well-known that the outer automorphism group $\Out(G)$ decomposes as a semi-direct product $\Out(G)=\textup{Outdiag}(G)\rtimes D$ where $\textup{Outdiag}(G)$ is the group of outer diagonal automorphisms of $G$ (see, e.g., \cite[\S 2.5]{GLS98}) and $D$ is a group of field and graph automorphisms (denoted by $\Phi_G\Gamma_G$ in \cite{GLS98}). 

In 2012, Sp\"ath \cite{Sp12} suggested the following condition, which is called \emph{$A(\infty)$ condition} later by Cabanes and Sp\"ath (see Definition 2.2 of \cite{CS19}), in the investigation of inductive McKay condition for groups of Lie type.

\begin{cond}[$A(\infty)$ condition]
For every $\chi\in\Irr(G)$, there exists a $\textup{Outdiag}(G)$-conjugate $\chi_0$ of $\chi$ such that the stabilizer of $\chi_0$ in $\Out(G)$ has the form $C'D'$, where $C'\le \textup{Outdiag}(G)$ and $D'\le D$, and such that $\chi_0$ extends to $GD'$.
\end{cond}

The $A(\infty)$ condition played an important role in the proof of the inductive McKay condition.
It was continuously studied by Cabanes and Späth in a series of papers \cite{CS17a,CS17b,CS19}, and ultimately proved by Sp\"ath \cite{Sp25} in 2025.
Combined with Lusztig's Jordan decomposition (see, e.g. \cite[\S2.6]{GM20}), we can use the $A(\infty)$ condition to completely determine the action of the automorphism group on the set of irreducible characters for quasi-simple groups of Lie type.

Now we consider the following analogue of the $A(\infty)$ condition for Brauer characters.

\begin{cond}[Brauer $A(\infty)$ condition]\label{cond:1.1}
Assume that $p$ is a non-defing characteristic.
For every $\varphi\in\IBr(G)$, there exists a $\textup{Outdiag}(G)$-conjugate $\varphi_0$ of $\varphi$ such that the stabilizer of $\varphi_0$ in $\Out(G)$ has the form $C'D'$, where $C'\le \textup{Outdiag}(G)$ and $D'\le D$, and such that $\varphi_0$ extends to $GD'$.
\end{cond}

The Brauer $A(\infty)$ condition plays a crucial role in the criteria for the inductive Alperin weight condition by Brough and Sp\"ath \cite{BS22} and for the inductive Brauer--Glauberman condition by Feng and Sp\"ath \cite{FS23}.
In the work \cite{FLZ22} of Feng, Z.~Li, and Zhang on the reduction of the inductive blockwise Alperin weight condition to quasi-isolated blocks, the Brauer $A(\infty)$ condition is also a basic assumption.
Moreover, Feng, Yu, and Zhang \cite{FYZ23} proved the inductive blockwise Alperin weight condition for the $2$-blocks of groups of types $\ty B$ and $\ty D$, modulo the Brauer $A(\infty)$ condition.

The Brauer $A(\infty)$ condition has been established for groups of type $\ty A$ by Feng, C.~Li and Zhang \cite{FLZ21}, for groups of type $\ty C$ and the prime 2 by Feng and Malle \cite{FM22}. These works depends on unitriangularity of decomposition matrices for related groups; in the unitriangular situation, there is a canonical bijection (see \cite[Lemma. 2.1]{Na23}).
Unfortunately, unitriangularity has not been established for all groups of Lie type; in general, unitriangularity remains an open problem.

Recently, Feng, Z.~Li and Zhang \cite{FLZ26} proved the  Brauer $A(\infty)$ condition for groups of type $\ty B$, $\ty C$ or $\ty E_7$ at good non-defining primes.
The proofs in \cite{FLZ26} are independent of unitriangularity and rely only on the unimodularity of decomposition matrices
in this paper, we generalize this result, and prove:

\begin{thm}\label{mainthm-1}
Assume that $p$ is a good non-defining prime for $G$. Then Brauer $A(\infty)$ condition holds.
\end{thm}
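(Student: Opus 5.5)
We will reduce the Brauer condition to the ordinary $A(\infty)$ condition, now a theorem of Sp\"ath \cite{Sp25}. The bridge is an $\Aut(G)$-stable unitriangular-free basic set together with an $\Aut(G)$-equivariant bijection between that basic set and $\IBr(G)$.

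\textbf{Step 1: an equivariant basic set.} Since $p$ is good and non-defining, results of Geck and Hiss (refined by Geck for bad primes excluded) show the following. The set $\cE(G,\ell')$ of irreducible characters in Lusztig series $\cE(G,s)$ with $s$ semisimple of $p'$-order is an $\Aut(G)$-stable basic set for $\IBr(G)$; when $Z(G)$ is disconnected one may need the version for $\tilde G$ restricted to $G$. In particular the restriction to $p$-regular elements of the characters in $\cE(G,\ell')$ gives a $\ZZ$-basis of $\ZZ\IBr(G)$. The decomposition matrix restricted to this basic set is then unimodular, i.e.\ it lies in $\mathrm{GL}_n(\ZZ)$. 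Both $\cE(G,\ell')$ and $\IBr(G)$ carry actions of $A=\Aut(G)$, and the transition matrix is $A$-equivariant as a permutation-compatible matrix.

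\textbf{Step 2: equivariant bijection.} Next I would invoke the lemma used in \cite{FLZ26} (a generalization of \cite[Lemma 2.1]{Na23}): if two $A$-sets index the rows and columns of an $A$-invariant unimodular matrix, then for every subgroup $H\le A$ the numbers of $H$-fixed points coincide. This is because the $H$-fixed points are counted by the trace of the permutation action on $\mathbb{Q}$-spans, and the matrix intertwines the two actions. For the relevant groups one then needs more than equal fixed-point counts. The key is to show the two permutation modules are isomorphic \emph{as $A$-sets}, at least for the stabilizer structure $C'D'$ we need. The trick: $\Out(G)=\textup{Outdiag}(G)\rtimes D$ where $D$ is close to cyclic (or small products of cyclic and $\fS_3$), and for groups of this shape (hypoelementary-type) equality of fixed-point counts for all subgroups, or of marks, suffices to produce an equivariant bijection $\Omega:\cE(G,\ell')\to\IBr(G)$ preserving stabilizers. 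I expect this to be the main obstacle: marks equality for all subgroups does give isomorphism of $A$-sets by Burnside, but we only have it for cyclic subgroups from character theory. So I would try to reduce to the groups $\Out(G)$ where cyclic-subgroup data determines the $A$-set, handling $\ty D_4$ (with $\fS_3$ graph automorphisms) and $\ty D_n$ with $(\ZZ/2)^2$ diagonal part separately.

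\textbf{Step 3: extendibility.} Given $\varphi\in\IBr(G)$, I will take $\chi=\Omega^{-1}(\varphi)$ and apply Sp\"ath's $A(\infty)$ condition to obtain a $\textup{Outdiag}(G)$-conjugate $\chi_0$ with stabilizer $C'D'$ that extends to $GD'$; then $\varphi_0=\Omega(\chi_0)$ has the same stabilizer. For extension: the unitriangular-free basic set is stable, so restrict an extension $\tilde\chi_0$ of $\chi_0$ to $p$-regular elements of $GD'$. Writing $\chi_0^0=\sum d_\psi\psi$, I need $\varphi_0$ itself to extend. Here I will use that $D'$ modulo $G$-inner parts is (close to) cyclic, hence $GD'/G$ is cyclic for field automorphisms. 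An invariant Brauer character then extends to $GD'$ automatically when $GD'/G$ is cyclic, as in Navarro's theory. When $D'$ involves a graph automorphism and is non-cyclic, I will apply the Sylow criterion: it suffices to extend to $GQ$ for Sylow $q$-subgroups $Q/G$, which are cyclic except for $q=2$ in type $\ty D$. In that case I would argue via the unimodular triangular part on the $\ell'$-series of the extended group $GD'$ to transfer extendibility from $\chi_0$.
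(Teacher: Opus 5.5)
Your architecture for the stabilizer part is the paper's: an equivariant bijection between the basic set $\cE(\bG^F,p')$ and $\IBr(\bG^F)$, then transport of Sp\"ath's ordinary $A(\infty)$ (Theorem~\ref{thm:cond} and Corollary~\ref{cor:A-inf-1}). But Step~2, which is the heart of the matter, is not carried out, and its justification is wrong. An invariant matrix that intertwines the rational permutation modules only shows that the permutation characters agree, i.e.\ $|\Omega^H|=|\Lambda^H|$ for \emph{cyclic} $H$; for noncyclic $H$ the number of fixed points is not a trace, as you notice yourself.

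Your remedy, restricting to groups where cyclic-subgroup data determines the $G$-set, also fails. Only cyclic groups have that property. For the Klein four group, a regular orbit plus two fixed points and three orbits of length $2$ have the same permutation character. The relevant outer automorphism groups are not cyclic.

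The missing input is Conlon's theorem (Theorem~\ref{thm:Colon}). Since $\Dec_{\cB,\cC}$ is invertible over $\ZZ$, you get $\FF_\ell\cB\cong\FF_\ell\cC$ for every prime $\ell$, hence equal marks on all \emph{hypoelementary} subgroups. With Burnside this settles every block whose stabilizer is hypoelementary. The hard cases are those where $\Out(\bG^F)$ is not hypoelementary: split $\ty D_4$, and $\ty E_6$ with $|\Z(\bG^F)|=3$ (missing from your list). There, isomorphism of permutation lattices need not give an isomorphism of $G$-sets; see the $24$-point dihedral example in the Remark following Theorem~\ref{thm:fixpt}. The paper works blockwise and bounds the Lusztig series (Lemmas~\ref{lem:E6-series} and~\ref{lem:D4-series}). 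It then proves that equal marks on hypoelementary subgroups force a $G$-set isomorphism in two situations. One is when there are at most $23$ points (Theorem~\ref{thm:fixpt}). The other is when a normal subgroup of prime order with hypoelementary quotient has at most $5$ fixed points (Lemma~\ref{lem:4.2-5}). Nothing in your plan replaces this.

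Step~3 also has a gap. A bijection obtained from fixed-point counts carries no information about decomposition numbers. So extendibility of $\chi_0$ says nothing about $\Omega(\chi_0)$, except in the automatic case where $D_{\varphi_0}$ is cyclic.

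Your Sylow analysis is also off. Noncyclic Sylow subgroups of $D$ occur for $\ty E_6$ (the graph automorphism together with field automorphisms of $2$-power order) and for $\ty D_4$ at the prime $3$, not only at $2$ in type $\ty D$. The ``unimodular triangular part'' you want to use is exactly what is not available.

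The paper handles these cases as follows.
\begin{itemize}
\item $\ty E_6$ is settled by \cite[Remark~9]{Fe26}.
\item Type $\ty D_n$ gets a separate construction, Proposition~\ref{thm6.10}. A parity argument for $2$-groups (Lemma~\ref{lem6.6}) gives a $\Z(\bG^F)\rtimes S$-equivariant bijection $\varphi\mapsto\chi$ with $d_{\chi,\varphi}$ odd, where $S\in\Syl_2(D)$. By \cite[Cor.~5]{Fe26} this transfers extendibility to $2$-subgroups from $\chi$ to $\varphi$.
\item Sp\"ath's Lemma~\ref{lem6.7} supplies extendibility of $\chi$ to noncyclic $2$-subgroups when $|\tbG^F\chi|\neq 2$.
\item Odd primes use \cite[Lemma~6.2]{Sp25} together with \cite[Thm.~2]{Fe26}.
\item The representative $\varphi_0$ in each $\tbG^F$-orbit must be chosen by the transversal construction of Lemma~\ref{lem6.4}, not simply as the image of Sp\"ath's $\chi_0$.
\end{itemize}
Finally, type $\ty A$ needs \cite{FLZ21}. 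There $p$ may divide $|(\Z(\bG)/\Z^\circ(\bG))^F|$, so Theorem~\ref{thm:basic-set} does not apply to $\bG^F$ directly.
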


This result leaves Brauer $A(\infty)$ condition open only for the primes 2 and 3.

To prove Theorem~\ref{mainthm-1}, we use the basic set for groups of Lie type in good non-defining primes by Geck and Hiss \cite{GH91,Ge93}.
In the past, we often used the unitriangularity of decomposition matrices to obtain equivariance and extendibility, thereby reducing the problem to the ordinary characters in order to use the $A(\infty)$
condition. Since unitriangularity remains open at present, we turn instead to studying the relationship between basic sets and Brauer characters without relying on unitriangularity.
If the outer automorphism groups are hypoelementary, then this follows from the result of Conlon (Theorem~\ref{thm:Colon}).
When the outer automorphism groups are not hypoelementary, we need to develop some results on permutation group theory (Theorem~\ref{thm:fixpt}).
The key crucial here is that if a finite group acts on two finite sets, then to what extent does the isomorphism of the corresponding permutation modules imply that the two actions are permutation isomorphic?
In this sense, this is a continuation of the paper \cite{FLZ26}.

Let $A$ be a finite group acting corpimely by automorphisms on a finite group $G$. and write $\C_G(A)$ for the
fixed-point subgroup. 
The Glauberman--Isaacs correspondence (see \cite[\S13]{Is06}) provides
a canonical bijection between $\Irr_A(G)$, the set of
$A$-invariant irreducible complex characters of $G$, and
$\Irr(\C_G(A))$. 
This correspondence is a fundamental tool in the
representation theory of finite groups, with applications ranging
from the proofs of the McKay and Alperin weight conjectures for
$p$-solvable groups to the Langlands program. A consequence
is there exists an $A$-equivariant bijection between
$\Irr(G)$ and the set of conjugacy classes of $G$.

We use Theorem~\ref{mainthm-1} to investigate the following conjecture, proposed by Navarro, which is in some sense a modular analogue of the Glauberman--Isaacs correspondence.

\begin{conj}[Navarro, 1994 {\cite{Na94}}]\label{conj:coprime}
Suppose that $A$ and $G$ are finite groups such that $A$ acts coprimely via automorphisms on $G$.
Let $p$ be a prime.  Then
\[|\IBr_A(G)| = |\IBr(\C_G(A))|,\]
where $\IBr_A(G)$ denotes the set of irreducible $p$-Brauer characters of $G$ fixed by $A$.   
\end{conj}

Conjecture~\ref{conj:coprime} is equivalent to the existence of an 
$A$-equivariant bijection between the set of conjugacy classes of $G$ and the set of irreducible characters of $G$ (see Corollary~13.10 and Lemma~13.23 of \cite{Is06}).

This problem was first considered by Uno in~\cite{Un83}, where he gave a natural map $\IBr_A(G)\to\IBr(\C_G(A))$ when the group $G$ is $p$-solvable. 
Several other proofs of this correspondence were also given by Wolf~\cite{Wo87} and Graves~\cite{Gr94}.
Additionally, Navarro, Sp\"ath and Tiep \cite{NST17} proved that $A$ fixes a unique irreducible Brauer character of $G$ if and only if $\C_G(A)$ is an $p$-group.

However, when $G$ is not $p$-solvable, no
further construction of such a correspondence has been made.
Efforts were therefore made to reduce it to simple groups, especially after the McKay conjecture was successfully reduced to simple groups by Isaacs, Malle and Navarro~\cite{IMN07} and the Alperin weight conjecture by Navarro and Tiep~\cite{NT11}.
In~2016, Sp\"ath and Vallejo~\cite{SV16} reduced Conjecture~\ref{conj:coprime} to a problem on quasi-simple groups; they proved that  Conjecture~\ref{conj:coprime} holds for all finite group at the prime $p$ if the so-called inductive Brauer--Glauberman condition (see Definition~6.1 of \cite{SV16}) holds for all non-abelian simple groups at the prime $p$.

In \cite{NST17}, Navarro, Sp\"ath and Tiep proved that the inductive Brauer--Glauberman condition holds for all simple groups with cyclic outer automorphism groups, for all simple groups not of Lie type, for every finite simple
group of Lie type in defining characteristic.
Feng and Sp\"ath \cite{FS23} showed that all simple groups of type $\ty A$ satisfy the inductive Brauer--Glauberman condition.

In this paper, we establish the inductive Brauer--Glauberman condition for all simple groups of Lie type at all good primes (see Theorem~\ref{thm:good-primes}). Consequently, we prove:

\begin{thm}\label{mainthm-2}
If $p>3$, then Conjecture~\ref{conj:coprime} holds.    
\end{thm}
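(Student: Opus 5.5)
By the reduction theorem of Sp\"ath and Vallejo \cite{SV16}, Conjecture~\ref{conj:coprime} holds for all finite groups at a prime $p$ once every non-abelian finite simple group satisfies the inductive Brauer--Glauberman condition at $p$. So I will fix $p>3$ and verify this condition for every non-abelian simple group $S$, splitting according to the classification.

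The cases handled earlier come first. Navarro, Sp\"ath and Tiep \cite{NST17} established the condition for simple groups not of Lie type (alternating, sporadic, and the Tits group). They also proved it for simple groups of Lie type in their defining characteristic and for all simple groups with cyclic outer automorphism group. The only groups left are simple groups of Lie type $S$ in non-defining characteristic $p>3$. I will also send exceptional Schur multipliers to the \cite{NST17} case: these occur only for finitely many $S$, and for those the outer automorphism group is small and known. The remaining groups have $G$, the universal covering group, as the generic Schur cover.

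The key observation is that every prime $p>3$ is good for every simple type. Bad primes are $2$ for classical types $\ty B,\ty C,\ty D$; $2,3$ for $\ty G_2,\ty F_4,\ty E_6,\ty E_7$; and $2,3,5$ for $\ty E_8$. There are none for type $\ty A$. The Suzuki and Ree groups ${}^2\ty B_2,{}^2\ty G_2,{}^2\ty F_4$ need a separate remark, since $p$ is then automatically odd and non-defining and the relevant primes are again only $2,3$. Wherever this bookkeeping is delicate, in particular $\ty E_8$ with $p=5$, I will fall back on cyclicity of $\Out(S)$ and \cite{NST17}: for $\ty E_8$, $\Out(S)$ consists only of field automorphisms and is cyclic. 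Thus for every remaining $S$ the prime $p$ is good and non-defining, and Theorem~\ref{thm:good-primes} gives the inductive Brauer--Glauberman condition. That theorem is derived, via the criterion of Feng and Sp\"ath \cite{FS23}, from the Brauer $A(\infty)$ condition of Theorem~\ref{mainthm-1}.

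The step I expect to be the main obstacle is not this final assembly but Theorem~\ref{thm:good-primes} itself. There one must check that the hypotheses of the \cite{FS23} criterion, beyond Brauer $A(\infty)$, hold uniformly. In particular, one needs an $\Out(G)$-compatible parametrisation of the $\Out(G)$-action on $\IBr(G)$ relative to the action on conjugacy classes. I would get this by transferring the ordinary $A(\infty)$ condition, proved by Sp\"ath \cite{Sp25}, through the Geck--Hiss basic set, using the equivariance obtained in the proof of Theorem~\ref{mainthm-1}. For the present statement I take that theorem as given, so the proof reduces to the case analysis and the good-prime check above.
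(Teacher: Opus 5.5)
Your overall architecture (Sp\"ath--Vallejo reduction, dispose of the known families, note that $p>3$ is good, then apply Theorem~\ref{thm:good-primes}) matches the paper's. As written, though, the argument omits an ingredient the paper's proof treats as separate. The inductive Brauer--Glauberman condition of \cite[Definition~6.1]{SV16} has two parts, and you never account for part~(ii). The paper supplies part~(ii) explicitly from \cite[Thm.~A]{FR21}. It uses the criterion of \cite{FS23}, fed by Theorem~\ref{thm:good-primes} (Brauer $A(\infty)$ plus the equivariant bijection of Theorem~\ref{thm:cond}), for the rest. Your write-up never identifies this part of the condition or cites a source for it, so you should add the \cite{FR21} input to close the reduction.

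Second, your treatment of exceptional Schur multipliers is not justified. A small, known outer automorphism group need not be cyclic. For example, $\ty D_4(2)$ and ${}^2\ty E_6(2)$ (both with $\Out\cong\fS_3$), $\mathrm{PSL}_3(4)$ and $\mathrm{PSU}_4(3)$ are of Lie type with non-cyclic outer automorphism group. For suitable $p\ge 5$ dividing their orders they are in non-defining characteristic, so they lie in none of the three families covered by \cite{NST17}. These cases are handled in \cite{FS23}, as quoted at the start of the proof of Theorem~\ref{thm:good-primes}, so you should not split them off at all. More economically, the paper replaces your whole case analysis by \cite[Thm.~5.11]{FS23}. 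That result leaves only types $\ty B_n,\ty C_n,\ty D_n,{}^2\ty D_n,\ty E_6,{}^2\ty E_6,\ty E_7$ in non-defining characteristic. Then $\ty E_8$ with $p=5$, the Suzuki--Ree groups, $\ty G_2$, $\ty F_4$ and type $\ty A$ never arise. Your cyclic-$\Out$ argument for $\ty E_8$ is correct, just unnecessary.
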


This paper is organized as follows. 
In Section~\ref{sec:2}, we recall the theorem of Conlon (Theorem~\ref{thm:Colon}) and establish some results on permutation group theory.
Section~\ref{sec:size-series} estimates upper bounds on the size of Lusztig series for groups of types $\ty D_4$ and $\ty E_6$, while Section~\ref{sec:4} establishes the Brauer $A(\infty)$ condition for groups of type $\ty D_4$.
Finally, we complete the proofs of Theorem~\ref{mainthm-1} and Theorem~\ref{mainthm-2} in Section~\ref{sec:5}.

\section{Some results of permutation group theory}\label{sec:2}

Let $G$ be a group acting on a set $\Omega$. 
For a subgroup $H\le G$ (resp. an element $g \in G$), we denote the set of fixed points of $H$ in $\Omega$ by $\Omega^H$.
We abbreviate $\Omega^g$ for $\Omega^{\langle g\rangle}$.

Let \( G \) be a group that acts on sets $\Omega$ and $\Lambda$.
We say that $\Omega$ and $\Lambda$ are isomorphic as $G$-sets if there exists a $G$-equivariant bijection between them.

The following lemma is elementary.

\begin{lem}\label{lem-fix-pt-quo}
	Let $G$ be a group that acts on a set $\Omega$, and $N \trianglelefteq G$ a normal subgroup. Then
	\[\Omega^G =  (\Omega^N)^{G/N}.\]
\end{lem}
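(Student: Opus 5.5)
The plan is to first make sense of the right-hand side and then prove the equality by double inclusion, working directly from the definitions.

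Since $N$ is normal, the set $\Omega^N$ is stable under $G$. Indeed, if $\omega\in\Omega^N$, $g\in G$ and $n\in N$, then
\[ n\cdot(g\omega)=g\,(g^{-1}ng)\,\omega=g\omega, \]
because $g^{-1}ng\in N$. On $\Omega^N$ the subgroup $N$ acts trivially. Hence the action of $G$ on $\Omega^N$ factors through an action of $G/N$, given by $(gN)\cdot\omega=g\omega$. This is well defined because $gn\omega=g\omega$ for all $n\in N$. With this action, $(\Omega^N)^{G/N}$ is a well-defined subset of $\Omega^N$.

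For the inclusion $\Omega^G\subseteq(\Omega^N)^{G/N}$, take $\omega\in\Omega^G$. Since $N\le G$, the point $\omega$ is fixed by $N$, so $\omega\in\Omega^N$. It is also fixed by every $g\in G$, so it is fixed by every coset $gN$.

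For the reverse inclusion, take $\omega\in(\Omega^N)^{G/N}$. Then for every $g\in G$ we have $g\omega=(gN)\cdot\omega=\omega$, so $\omega\in\Omega^G$.

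There is no real obstacle here. The only point needing care is the well-definedness of the induced $G/N$-action, and that is exactly where the normality of $N$ is used.
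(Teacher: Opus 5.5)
Your proof is correct, and it is the routine verification the paper has in mind when it calls this lemma elementary; the paper gives no written proof. One small point of precision: normality is what makes $\Omega^N$ stable under $G$ (and makes $G/N$ a group), whereas the well-definedness of $(gN)\cdot\omega=g\omega$ on $\Omega^N$ only uses that $N$ fixes $\omega$.
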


We will make use of the following results of Burnside.

\begin{lem}[Burnside {\cite[Thm.~2.3.2]{Bo00}}]\label{lem:Burnside}
	Let \( G \) be a group that acts on finite sets $\Omega$ and $\Lambda$. 
	Then $\Omega$ and $\Lambda$ are isomorphic as $G$-sets if and only if 
	$|\Omega^H|=|\Lambda^H|$ for each subgroup $H$ of \( G \). 
\end{lem}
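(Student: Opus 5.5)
The plan is the standard table-of-marks argument. One direction is immediate. If $f\colon\Omega\to\Lambda$ is a $G$-equivariant bijection, then $\omega\in\Omega^H$ if and only if $f(\omega)\in\Lambda^H$, since $f(h\omega)=hf(\omega)$ and $f$ is injective. Hence $f$ restricts to a bijection $\Omega^H\to\Lambda^H$ for every $H\le G$.

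For the converse I will decompose both sets into orbits. Every finite $G$-set is a disjoint union of transitive ones, and a transitive $G$-set with point stabilizer $K$ is isomorphic to $G/K$. Also $G/K\cong G/K'$ as $G$-sets exactly when $K$ and $K'$ are $G$-conjugate. So, choosing representatives $K_1,\dots,K_r$ of the conjugacy classes of subgroups of $G$, I write
\[
\Omega\cong\bigsqcup_i a_i\,(G/K_i), \qquad \Lambda\cong\bigsqcup_i b_i\,(G/K_i)
\]
with nonnegative integer multiplicities $a_i,b_i$. It is enough to show $a_i=b_i$ for all $i$, because the orbit-by-orbit isomorphisms then assemble into a $G$-isomorphism $\Omega\to\Lambda$.

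Next I will use the marks $m(H,K)=|(G/K)^H|$, which count cosets $gK$ with $H\le gKg^{-1}$. Fixed points are additive over disjoint unions, so the hypothesis gives
\[
\sum_i (a_i-b_i)\,m(K_j,K_i)=0 \quad\text{for all } j.
\]
The main step is to show that the matrix $M=(m(K_j,K_i))_{j,i}$ is invertible. Order the $K_i$ so that $|K_1|\le\cdots\le|K_r|$. If $m(K_j,K_i)\ne0$, then $K_j$ is contained in a conjugate of $K_i$, so $|K_j|\le|K_i|$, with equality only if $K_j$ is conjugate to $K_i$, that is, $j=i$. Hence $M$ is triangular with respect to this ordering. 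Its diagonal entries are $m(K_i,K_i)=|\N_G(K_i):K_i|>0$. So $M$ is invertible, which forces $a_i=b_i$ for all $i$ and completes the proof.

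An equivalent route, avoiding the matrix, is a downward induction. Suppose $a\ne b$, and pick $K_i$ of maximal order with $a_i\ne b_i$. Evaluating the identity at $K_j=K_i$ kills every term except the one with index $i$, giving $(a_i-b_i)\,|\N_G(K_i):K_i|=0$, which is a contradiction. The only delicate point in either version is this triangularity of the mark matrix, and it is elementary.
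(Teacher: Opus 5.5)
Your proof is correct and is essentially the standard argument behind the result the paper simply cites from \cite[Thm.~2.3.2]{Bo00}: the table of marks is triangular with diagonal entries $|\N_G(K):K|>0$, so a finite $G$-set is determined by its numbers of fixed points. You tacitly assume $G$ is finite when you order the classes $K_i$ by $|K_i|$; this is Bouc's setting and holds throughout the paper, and for an arbitrary $G$ one can first replace $G$ by its finite image in the symmetric group on $\Omega\sqcup\Lambda$.
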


\begin{defn}\label{defn:equiv-mtx}
	Let \( G \) be a finite group acting on two finite sets \( \Omega \) and \( \Lambda \) of the same cardinality, and let \( \cO \) be a commutative ring with unity. 
	Let \( U = (u_{\omega,\lambda})_{\omega \in \Omega,\ \lambda \in \Lambda} \) be a matrix over \( \cO \) with rows indexed by \( \Omega \) and columns indexed by \( \Lambda \).
	We say that \( U \) is a \emph{\( G \)-invariant matrix between \( \Omega \) and \( \Lambda \)} if
	\[
	u_{g\cdot \omega,\, g\cdot \lambda} = u_{\omega,\lambda}
	\]
	for all \( g \in G \), \( \omega \in \Omega \), and \( \lambda \in \Lambda \).
\end{defn}

If the matrix \( U \) in Definition~\ref{defn:equiv-mtx} has determinant invertible in \( \cO \), then there is an isomorphism of \( \cO G \)-modules between \( \cO \Omega \) and \( \cO \Lambda \).

We recall the definition of finite ($\ell$-)hypoelementary groups (see for instance \cite[Definition~3.5.4]{Bo00}).

\begin{defn}
Let $\ell$ be a prime.  A finite group $G$ is called \emph{$\ell$-hypoelementary} if it has a normal
$\ell$-subgroup $P\trianglelefteq G$ such that $G/P$ is cyclic of order prime to $\ell$.
In this situation, $P=\rO_\ell(G)$ is a Sylow $\ell$-subgroup of $G$.

A finite group is called \emph{hypoelementary} if it is $\ell$-hypoelementary for some prime
$\ell$.
\end{defn}

Clearly, subgroups and quotient groups of $\ell$-hypoelementary groups are also $\ell$-hypoelementary.

We recall the following result of Conlon. 

\begin{thm}[Conlon \cite{Co68}]\label{thm:Colon}
Let a finite group $G$ act on finite sets $\Omega$ and $\Lambda$, where $|\Omega|=|\Lambda|$. Let $\ell$ be a prime.
Then the following are equivalent.
\begin{enumerate}[\rm(1)]
	\item There exists a $G$-invariant integer matrix $U$ between $\Omega$ and $\Lambda$, 
	and $\ell\nmid\det U$.
	\item $\FF_\ell \Omega\cong\FF_\ell \Lambda$.
        \item $\mathbb{Z}_\ell\Omega\cong \mathbb{Z}_\ell\Lambda$.
	\item $|\Omega^H|=|\Lambda^H|$, for every $\ell$-hypoelementary subgroup $H$ of $G$.
\end{enumerate}
\end{thm}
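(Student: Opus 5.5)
We prove the cycle of implications (1)$\Rightarrow$(2)$\Rightarrow$(3)$\Rightarrow$(4)$\Rightarrow$(1), with the hard direction being the passage from fixed-point counts back to modules.

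\textbf{(1)$\Rightarrow$(2).} Given $U$, define $\cO\Lambda\to\cO\Omega$ by $\lambda\mapsto\sum_\omega u_{\omega,\lambda}\,\omega$. The $G$-invariance $u_{g\omega,g\lambda}=u_{\omega,\lambda}$ is exactly $G$-linearity. Reducing modulo $\ell$, the determinant stays nonzero in $\FF_\ell$, so the map is an isomorphism $\FF_\ell\Lambda\cong\FF_\ell\Omega$.

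\textbf{(2)$\Rightarrow$(3).} We lift isomorphisms from $\FF_\ell$ to $\ZZ_\ell$ for $\ell$-permutation modules. The modules $\ZZ_\ell\Omega$ and $\ZZ_\ell\Lambda$ decompose into indecomposable trivial-source summands. By Green/Scott theory, reduction modulo $\ell$ gives a bijection between isomorphism classes of trivial-source $\ZZ_\ell G$-lattices and trivial-source $\FF_\ell G$-modules, and it respects direct sums. Krull--Schmidt then gives the claim.

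\textbf{(3)$\Rightarrow$(4).} Let $H$ be $\ell$-hypoelementary, so $H=P\rtimes\langle c\rangle$ with $P=\rO_\ell(H)$ and $c$ an $\ell'$-element. Restrict the isomorphism to $H$. A permutation lattice $\ZZ_\ell X$ decomposes as $\bigoplus\ZZ_\ell[H/K]$ over orbits. For such modules one has the Brauer construction with respect to $P$: its $\FF_\ell$-reduction at $P$ is $\FF_\ell[X^P]$ as an $\N_H(P)/P=H/P$-module.

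Now $H/P$ is a cyclic $\ell'$-group, so $\FF_\ell$-modules for it are determined up to isomorphism by their Brauer characters. The Brauer character of $\FF_\ell[X^P]$ at $cP$ equals $|X^{\langle P,c\rangle}|=|X^H|$. Since isomorphic modules have isomorphic Brauer constructions, we get $|\Omega^H|=|\Lambda^H|$.

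Strictly, for (3)$\Rightarrow$(4) it suffices to apply $\FF_\ell\otimes$ and then the Brauer quotient of $\FF_\ell X$, which is canonically $\FF_\ell[X^P]$. So (2) already implies (4) directly.

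\textbf{(4)$\Rightarrow$(1).} This is the main obstacle. I would use Dress induction and the Burnside ring localized at $\ell$. Consider the virtual $G$-set $x=[\Omega]-[\Lambda]$ in the Burnside ring $B(G)$. Its marks $\phi_H(x)$ vanish on all $\ell$-hypoelementary $H$. By Dress's theorem on the prime ideals of $B(G)$, the ideals containing $\ell$ correspond to $\ell$-hypoelementary (``$\ell$-perfect'' cyclic-mod-$\ell$) subgroups. Hence $x$ lies in every prime ideal of $B(G)_{(\ell)}$ above $\ell$, and combined with a suitable integrality argument, some integer $m$ prime to $\ell$ has $m x$ killed in the relevant Green functor.

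Concretely, the plan is to show that the linearization map $B(G)\to$ (Green ring of $\ZZ_\ell G$-permutation lattices) sends $x$ to zero after localization. Then $\ZZ_\ell\Omega\oplus Y\cong\ZZ_\ell\Lambda\oplus Y$, and Krull--Schmidt over the complete ring $\ZZ_\ell$ cancels $Y$, giving (3).

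To reach (1), choose an isomorphism $\ZZ_\ell\Lambda\to\ZZ_\ell\Omega$; its matrix is $G$-invariant with entries in $\ZZ_\ell$ and unit determinant. The space of $G$-invariant matrices is the $\ZZ$-lattice $\mathrm{Hom}_{\ZZ G}(\ZZ\Lambda,\ZZ\Omega)$, spanned by orbit-sum matrices of $G$ acting on $\Omega\times\Lambda$. Since $\ZZ$ is dense in $\ZZ_\ell$, approximate the $\ZZ_\ell$-coefficients by integers congruent modulo $\ell$. This yields an integer invariant $U$ with $\det U\equiv\det$ (a unit) mod $\ell$, so $\ell\nmid\det U$.

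The delicate point throughout is the Dress induction step turning mark equalities on hypoelementary subgroups into module isomorphism. The main alternative I would try first is Conlon's induction theorem: the species of the $\ZZ_\ell$ permutation ring are indexed by pairs $(P,\,cP)$, i.e.\ by $\ell$-hypoelementary subgroups, and they separate permutation modules.
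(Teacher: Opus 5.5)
\textbf{Verdict: genuine gap in the hard direction (4)$\Rightarrow$(1).} The implications (1)$\Rightarrow$(2)$\Rightarrow$(3) and the Brauer-construction argument for (2)$\Rightarrow$(4) are correct, and the latter is the standard one. In fact (1), (2), (3) are equivalent by elementary means. $\operatorname{Hom}_{\FF_\ell G}(\FF_\ell\Lambda,\FF_\ell\Omega)$ is the $\FF_\ell$-span of the indicator matrices of the $G$-orbits on $\Omega\times\Lambda$. So any isomorphism lifts directly to a $G$-invariant integer matrix with determinant prime to $\ell$, and you do not need Scott's lifting theorem.

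The problem is (4)$\Rightarrow$(1), which is the entire content of the theorem. The paper does not prove it either; it cites the whole equivalence from \cite[Thm.~3.5.5]{Bo00}. Your Burnside-ring step is false as stated. The prime ideals of $B(G)$ containing $\ell$ are $\mathfrak p_{K,\ell}=\{y:\phi_K(y)\equiv 0 \pmod{\ell}\}$, one for each conjugacy class of $\ell$-perfect subgroups $K$ (i.e.\ $\rO^\ell(K)=K$). They are not indexed by $\ell$-hypoelementary subgroups, and vanishing of the marks on $\ell$-hypoelementary subgroups does not put $x$ into all of them. For example, take $G=A_5$. Dress's idempotent $e_G\in B(G)$ has $\phi_G(e_G)=1$ and $\phi_H(e_G)=0$ for all $H<G$. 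Writing $e_G=[\Omega]-[\Lambda]$ gives sets with $|\Omega|=|\Lambda|$ that satisfy (4) for every prime $\ell$, yet $e_G\notin\mathfrak p_{G,\ell}$. So your ``suitable integrality argument'' producing $m$ with $mx\mapsto 0$ has no valid starting point, and you do not supply one.

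The missing ingredient is Conlon's induction theorem: equivalently, the species $(P,s)$ of the ring of $\ell$-permutation modules separate its elements. You mention this only at the end as something to try. Granting it, the argument is short:
\begin{enumerate}
\item In $\mathbb{Q}\otimes a(\FF_\ell G)$ one has $[\FF_\ell]=\sum_H c_H[\FF_\ell[G/H]]$ with $c_H\in\mathbb{Q}$ and $H$ running over $\ell$-hypoelementary subgroups.
\item Put $z=[\FF_\ell\Omega]-[\FF_\ell\Lambda]$. Frobenius reciprocity gives $z=\sum_H c_H\operatorname{Ind}_H^G\operatorname{Res}_H z$.
\item Every subgroup of an $\ell$-hypoelementary $H$ is again $\ell$-hypoelementary. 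So (4) and Lemma~\ref{lem:Burnside} give $\Omega\cong\Lambda$ as $H$-sets, hence $\operatorname{Res}_H z=0$ and $z=0$.
\item The Green ring is free on indecomposables, so Krull--Schmidt yields (2), and lifting as above yields (1).
\end{enumerate}
The separation of species is itself nontrivial. It rests on the Brauer-quotient correspondence between indecomposable $\ell$-permutation modules with vertex $P$ and projective indecomposable $\FF_\ell[\N_G(P)/P]$-modules, together with the fact that projectives are determined by their Brauer characters. It must therefore be either proved or explicitly cited. As written, your hard direction rests on an incorrect lemma rather than on this theorem.
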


\begin{proof}
See  \cite[Thm.~3.5.5]{Bo00}.
\end{proof}

\begin{cor}\label{cor:hypoelementary}
	Let a finite $\ell$-hypoelementary group $G$ act on finite sets $\Omega$ and $\Lambda$, where
	$|\Omega|=|\Lambda|$. 
	Suppose that there is a $G$-invariant integer matrix $U$ between $\Omega$ and $\Lambda$, 
	and $\ell\nmid\det U$. Then
	\(	\Omega\) and \(\Lambda\) are isomorphic as $G$-sets.
\end{cor}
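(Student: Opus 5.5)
The plan is to combine Conlon's theorem with Burnside's criterion. First apply Theorem~\ref{thm:Colon}, direction (1)$\Rightarrow$(4), to the given data. We have a $G$-invariant integer matrix $U$ between $\Omega$ and $\Lambda$ with $\ell\nmid\det U$, and $|\Omega|=|\Lambda|$. So condition (1) holds, and hence condition (4): $|\Omega^H|=|\Lambda^H|$ for every $\ell$-hypoelementary subgroup $H$ of $G$.

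Next we use the hypothesis that $G$ is itself $\ell$-hypoelementary. As noted just before Theorem~\ref{thm:Colon}, every subgroup of an $\ell$-hypoelementary group is again $\ell$-hypoelementary. For a subgroup $H\le G$ this is clear: $H\cap\rO_\ell(G)$ is a normal $\ell$-subgroup of $H$, and $H/(H\cap \rO_\ell(G))$ embeds in the cyclic $\ell'$-group $G/\rO_\ell(G)$. Hence condition (4) gives $|\Omega^H|=|\Lambda^H|$ for \emph{every} subgroup $H$ of $G$.

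Finally, Lemma~\ref{lem:Burnside} turns the equality of fixed-point counts for all subgroups into an isomorphism of $G$-sets $\Omega\cong\Lambda$, which is the claim.

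There is no real obstacle here. The only point needing care is the closure of $\ell$-hypoelementarity under taking subgroups, and that follows directly from the definition as above. The substantive content is hidden in Conlon's theorem, which we are allowed to assume.
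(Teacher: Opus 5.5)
Your proposal is correct and follows essentially the same route as the paper: Conlon's theorem (1)$\Rightarrow$(4) gives equal fixed-point counts on $\ell$-hypoelementary subgroups, subgroup-closure of $\ell$-hypoelementarity makes this hold for all subgroups, and Burnside's lemma finishes. Your explicit check of subgroup-closure via $H\cap\rO_\ell(G)$ is also correct.
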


\begin{proof}
This follows from Lemma~\ref{lem:Burnside} and Theorem~\ref{thm:Colon} directly, since every subgroup of an $\ell$-hypoelementary group is also $\ell$-hypoelementary. 
\end{proof}

We use double curly brackets to denote multiset, i.e. $\{\{a, b, b\}\}$.

\begin{lem}\label{lem: divisor}
Let $p$ be a prime number and let $X, Y$ be finite $G$-sets. Suppose the sizes of orbits of $X$ and $Y$ are $\{d_i\}_{i= 1}^r$ and $\{e_j\}_{j= 1}^s$, respectively. If $\mathbb{F}_pX\cong \mathbb{F}_pY$ as $\mathbb{F}_pG$-modules, then $r= s$ and $\{\{v_p(d_1), \cdots, v_p(d_r)\}\}= \{\{v_p(e_1), \cdots, v_p(e_s)\}\}$.
\end{lem}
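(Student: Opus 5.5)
The plan is to reduce to the case of a Sylow $p$-subgroup and then read off the orbit data from fixed points. Let $P\in\Syl_p(G)$. Restricting the isomorphism $\FF_p X\cong\FF_p Y$ to $P$ gives $\FF_p X\cong \FF_p Y$ as $\FF_p P$-modules. Since $P$ is a $p$-group, it is $p$-hypoelementary, and so is every subgroup of $P$. By Theorem~\ref{thm:Colon}, (2)$\Rightarrow$(4), applied to $P$ with $\ell=p$, we get $|X^H|=|Y^H|$ for every subgroup $H\le P$. Lemma~\ref{lem:Burnside} then shows that $X$ and $Y$ are isomorphic as $P$-sets; this is just Corollary~\ref{cor:hypoelementary} in the special case of a $p$-group. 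Note that $|X|=|Y|$ follows from comparing $\FF_p$-dimensions.

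That alone does not recover $G$-orbit data, so we also use the $G$-module structure directly. For a transitive $G$-set $G/K$, the permutation module $\FF_p[G/K]$ is indecomposable exactly when\dots{} but in general it is not, so we cannot simply match orbits to summands. Instead, the approach is to count orbits via the trivial module. The number of $G$-orbits on $X$ equals $\dim_{\FF_p}\mathrm{Hom}_{\FF_p G}(\FF_p,\FF_p X)$, since $\mathrm{Hom}(\FF_p,\FF_p X)=(\FF_p X)^G$ has as basis the orbit sums. Hence the isomorphism gives $r=s$.

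For the multiset of valuations, refine this invariant. For each $k\ge 0$ consider the subspace of $(\FF_p X)^G$ spanned by the orbit sums, together with the image of relative trace maps. For an orbit $O\cong G/K$, the orbit sum $\sigma_O$ lies in the image of $\mathrm{Tr}_Q^G$ applied to $(\FF_p X)^Q$ for suitable $Q$. More concretely, I would use the Brauer quotient. For a $p$-subgroup $Q$, the Brauer construction satisfies
\[(\FF_p X)(Q)\cong \FF_p[X^Q]\]
as $\N_G(Q)$-modules, and it is a functor of the module. So the isomorphism $\FF_p X\cong \FF_p Y$ yields $\FF_p[X^Q]\cong\FF_p[Y^Q]$ as $\FF_p\N_G(Q)$-modules for all $p$-subgroups $Q$. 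Applying the orbit count above to $\N_G(Q)$ shows that the number of $\N_G(Q)$-orbits on $X^Q$ equals that on $Y^Q$.

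The key step, and the expected main obstacle, is converting these equalities into the valuation multiset. A $G$-orbit $G/K$ has $v_p(|G/K|)=v_p|G|-v_p|K|$, and $v_p|K|$ is the order of a Sylow $p$-subgroup $S_K$ of $K$. Define $N_X(Q)$ to be the number of $G$-orbits of $X$ whose point stabilizers have Sylow $p$-subgroups conjugate to $Q$; these are the vertices of the summands $\FF_p[G/K]$, which all have trivial source. I would show that $N_X(Q)$ is determined by the module as follows. The $G$-orbits containing a $Q$-fixed point are exactly those whose stabilizer contains a conjugate of $Q$. These orbits correspond to $\N_G(Q)$-orbits on $X^Q$ that are not fixed by any strictly larger $p$-subgroup in the appropriate sense; an alternative route is to use the known fact that trivial source modules are determined by their Brauer quotients. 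In either case, a Möbius-type inversion over $G$-classes of $p$-subgroups, ordered by size, recovers the number of orbits whose stabilizer has Sylow subgroup of each order $p^a$. That number of orbits with $v_p$ of stabilizer equal to $a$ is then the same for $X$ and $Y$, which gives $\{\{v_p(d_i)\}\}=\{\{v_p(e_j)\}\}$. If the inversion becomes messy, the fallback is to decompose into indecomposable trivial-source summands via the Green correspondence and count vertices orbit by orbit. That requires checking that each orbit $G/K$ contributes summands whose vertex orders distinguish $v_p|K|$, for example the unique summand containing the orbit sum (Scott module) having vertex $S_K$.
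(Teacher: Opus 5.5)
\textbf{Verdict: there is a genuine gap at the key step.}

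\textbf{What is fine.} Your proof of $r=s$ via $\dim_{\FF_p}(\FF_pX)^G$ is correct. The Brauer-quotient isomorphisms $\FF_p[X^Q]\cong\FF_p[Y^Q]$ of $\FF_p\N_G(Q)$-modules are also correct. The Sylow-restriction paragraph is correct but plays no role.

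\textbf{Where it breaks.} The equalities you propose to invert are too coarse. Knowing that $X^Q$ and $Y^Q$ have the same number of $\N_G(Q)$-orbits for every $p$-subgroup $Q$ does not determine the multiset $\{\{v_p(d_i)\}\}$, so no Möbius-type inversion of these numbers can succeed. For an orbit $O\cong G/K$, the $\N_G(Q)$-orbits on $O^Q$ are in bijection with the $K$-classes of subgroups of $K$ that are $G$-conjugate to $Q$. There may be several such classes, and their number is not a function of $|K|_p$.

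Concretely, let $G=A_4$, $p=2$, $X=G/V_4\sqcup G/1\sqcup G/1$ and $Y=G/A_4\sqcup G/C_2\sqcup G/C_2$. For $Q=1$, $Q=C_2$ (where $\N_G(Q)=V_4$) and $Q=V_4$, both sets have $3$, $3$ and $1$ orbits on $Q$-fixed points respectively. Yet the $v_2$-multisets are $\{\{0,2,2\}\}$ and $\{\{0,1,1\}\}$. Of course $\FF_2X\not\cong\FF_2Y$ here; the point is that the data you retain cannot see the answer.

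Your phrase ``orbits not fixed by a larger $p$-subgroup'' would require counting orbits on a \emph{subset} of $X^Q$. You never extract that count from the module. Likewise, the fact that $p$-permutation modules are determined by their Brauer quotients is beside the point, since isomorphism of the modules is your hypothesis, not your goal.

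\textbf{The missing idea.} You need a module invariant that detects $|G_x|_p$ orbit by orbit. You actually wrote this down with relative traces and then abandoned it. For $x\in O$ and $Q\le G$ one computes $\mathrm{Tr}_Q^G\bigl(\sum_{y\in Qx}y\bigr)=[G_x:Q\cap G_x]\,\sigma_O$, where $\sigma_O$ is the orbit sum. Hence, for a $p$-subgroup $Q$, the space $\mathrm{Tr}_Q^G((\FF_pO)^Q)$ is $\FF_p\sigma_O$ if $Q\cap G_x\in\Syl_p(G_x)$ for some $x\in O$, and is $0$ otherwise.

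Now set $W_a:=\sum_{Q}\mathrm{Tr}_Q^G((\FF_pX)^Q)$, with $Q$ running over $p$-subgroups of order at most $p^a$. Then $W_a=\bigoplus\FF_p\sigma_O$, the sum taken over the orbits $O$ with $v_p|G_x|\le a$. Module isomorphisms preserve fixed points and commute with traces, so $\dim W_a$ is the same for $X$ and $Y$ for every $a$. This yields the multiset of $v_p|G_x|=v_p|G|-v_p|O|$.

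\textbf{Repairing your own routes.} Within your Brauer-quotient framework, the right count is the number of $\N_G(Q)/Q$-orbits on $X^Q$ with $p'$-stabilizers. This equals the number of $G$-orbits whose point stabilizers have Sylow $p$-subgroups conjugate to $Q$, and it equals $\dim\mathrm{Tr}_1^{\N_G(Q)/Q}(\FF_p[X^Q])$, which is a module invariant.

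Your Scott-module fallback also works and is the same computation in disguise. Since $\dim(\FF_p[G/K])^G=1$, each orbit contributes exactly one indecomposable summand with a nonzero $G$-fixed vector. Its vertex is a Sylow $p$-subgroup of $K$: the trace computation rules out smaller vertices. Krull--Schmidt then finishes the argument. As written, however, you only name this as something that ``requires checking'', so the step that carries all the content is missing.
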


\begin{proof}
Follows from \cite[Cor.~5.3]{Is01} directly.
\end{proof}

\begin{thm}\label{thm:fixpt}
Let a finite group $G$ act on finite sets $\Omega$ and $\Lambda$.
Assume that $|\Omega^E|=|\Lambda^E|$ for every hypoelementary subgroup $E$ of $G$.
If $|\Omega|\le 23$, then $\Omega$ and $\Lambda$ are isomorphic as $G$-sets.
\end{thm}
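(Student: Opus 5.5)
The plan is a minimal-counterexample argument carried out in the Burnside ring. Suppose the statement fails, and choose a counterexample $(G,\Omega,\Lambda)$ with $|G|$ minimal.

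\emph{Reduction to the top mark.} The hypothesis of the theorem passes to every subgroup $K\le G$, because subgroups of hypoelementary groups are hypoelementary. By minimality, $\Omega$ and $\Lambda$ are therefore isomorphic as $K$-sets for every proper $K<G$. Hence $|\Omega^H|=|\Lambda^H|$ for all proper $H<G$. By Lemma~\ref{lem:Burnside} the two sets can then differ only in the mark at $G$ itself. That mark agrees if $G$ is hypoelementary, so $G$ is not hypoelementary.

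\emph{Reduction to a faithful action.} Let $N$ be the intersection of the kernels of $G$ on $\Omega$ and on $\Lambda$. By Lemma~\ref{lem-fix-pt-quo} the hypothesis descends to $G/N$: every hypoelementary subgroup of $G/N$ is the image of a hypoelementary subgroup of $G$, obtained by lifting a generator of the cyclic quotient and taking a Sylow subgroup of the preimage. So we may assume that $G$ acts faithfully on $\Omega\sqcup\Lambda$. We may also discard common orbits.

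\emph{Integrality forces large degree.} Put $x=[\Omega]-[\Lambda]$ in the Burnside ring $B(G)$. Then $x\neq 0$, but its mark $\varphi_H(x)$ vanishes for every $H\ne G$. Elements with this property are integer multiples of a single generator. By Möbius inversion on the subgroup lattice (Gluck/Yoshida), this generator is a rational multiple of
\[ e_G=\frac{1}{|G|}\sum_{K\le G}|K|\,\mu(K,G)\,[G/K]. \]
The multiple is dictated by the integrality of coefficients, and by Dress's theorem it is nonzero only when $G$ is not hypoelementary. Consequently
\[ x=c\sum_{K}\mu(K,G)\,m_K\,[G/K] \]
for some nonzero integer $c$ and explicit integers $m_K$. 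The orbits of $\Omega$ are exactly the positive part of this expression, up to sign. Therefore
\[ |\Omega|\ \ge\ \sum_{K:\ \mu(K,G)m_K>0}\mu(K,G)\,m_K\,|G:K|, \]
or the analogous sum over the negative part, whichever corresponds to $\Omega$. The goal is to show that this lower bound is at least $24$ for every non-hypoelementary $G$ with a faithful action on a set of size at most $46$.

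\emph{Main obstacle: controlling the candidate groups $G$.} I would first use the mark at the trivial subgroup, namely $\sum_K\mu(K,G)|K|\cdot|G:K|\cdot(\cdots)=0$, together with the requirement that every prime $\ell$ divides $|G:K|$ for some orbit. This requirement comes from Lemma~\ref{lem: divisor} and Theorem~\ref{thm:Colon}, since $\FF_\ell\Omega\cong\FF_\ell\Lambda$. Its effect is to rule out groups whose orbit sizes cannot balance. Next I would bound the degree below via the smallest non-hypoelementary groups; $D_{12}$ and $S_4$ are the first cases. Any $G$ admitting such an $x$ must contain a non-hypoelementary subgroup on which the restriction of $x$ is nonzero. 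Restricting to a minimal such subgroup $H$ gives $|\Omega|\ge$ the positive-part degree for $H$, so only minimal non-hypoelementary groups need to be tabulated. For each of them I would compute $\mu(K,H)$ and the resulting degree; I expect every degree to be at least $24$. Verifying this by a finite case check, possibly assisted by computer, is the heavy step of the proof.
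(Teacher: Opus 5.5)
Your reformulation is sound as far as it goes. After minimality, passing to a faithful action and cancelling common orbits, $x=[\Omega]-[\Lambda]$ is a nonzero element of the rank-one lattice of elements of $B(G)$ whose marks vanish at all proper subgroups, and $[\Omega]$ is its positive part. The theorem then becomes the assertion that this positive part has degree at least $24$ whenever $G$ is not hypoelementary. But that assertion is the whole content of the theorem, and you leave it as an expectation (``I expect every degree to be at least $24$'').

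\textbf{The gap.} The reduction you propose to make this a finite check does not work. In your minimal counterexample you have already shown that $\Omega\cong\Lambda$ as $K$-sets for every proper $K<G$. That is, $\operatorname{Res}^G_K x=0$ for all $K<G$, equivalently $\varphi_L(x)=0$ for all $L\le K$. So the only subgroup on which $x$ restricts nontrivially is $G$ itself. Your ``minimal such subgroup $H$'' is therefore $G$, and nothing forces $G$ to be a minimal non-hypoelementary group. For instance, $\fS_5$ contains the non-hypoelementary subgroups $\fS_4$, $\mathfrak{A}_5$ and $\fS_3\times\fS_2\cong D_{12}$. Yet the top-mark generator of $B(\fS_5)$ restricts to $0$ on each of them, so no bound for $\fS_5$ follows from tables for those groups. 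You would have to compute the generator and its positive-part degree for every non-hypoelementary group embeddable in $\fS_{46}$, and you give no argument that controls this family.

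\textbf{The missing idea.} What you need is an argument that bounds the degree from the arithmetic of orbit sizes, not from the M\"obius function of each candidate group. The paper proceeds as follows.
\begin{enumerate}[\rm(1)]
\item Arrange that $\Omega$ has $n_1>0$ fixed points and $\Lambda$ has none.
\item Minimality shows that no point stabilizer in $\Omega$ is a maximal subgroup $M$: otherwise $|\Lambda^M|=|\Omega^M|>0$ would produce a point of $\Lambda$ fixed by $G$. It also gives $m_M(\Lambda)=n_1/[\N_G(M):M]$. In particular, $\Omega$ has no orbit of prime size.
\item Theorem~\ref{thm:Colon} gives $\FF_\ell\Omega\cong\FF_\ell\Lambda$ for every prime $\ell$. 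By Lemma~\ref{lem: divisor}, the orbit sizes of $\Omega$ and $\Lambda$ then have the same multisets of $\ell$-adic valuations.
\item All orbit sizes are at most $23<30$. Comparing, over the orbits, the number of prime divisors and the sum of prime-power parts gives $u-v=n_1$ with $v\ge 1$. Hence $23\ge 7n_1+6$, so $n_1\in\{1,2\}$.
\item The case $n_1=2$ fails by a size count.
\item For $n_1=1$, the formula for $m_M(\Lambda)$ forces $G$ to be perfect. So $G$ has no proper subgroup of index at most $4$, which contradicts the forced existence of a $\Lambda$-orbit of size $2$ or $4$.
\end{enumerate}
No classification of groups is needed anywhere.
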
    

\begin{proof}

Let $G$ be a minimal counterexample. Then $G\neq 1$ and for any proper subgroup $H< G$, $\Omega\cong \Lambda$ as $H$-sets. By Lemma \ref{lem:Burnside}, we have $|\Omega^G|\neq |\Lambda^G|$. Let $\{G_i\}_{i= 1}^z$ be the representatives of the conjugacy classes of subgroups of $G$ and $G_1= G$. Then we have orbit decompositions $\Omega\cong \bigsqcup_{i= 1}^zn_i(G/ G_i)$ and $\Lambda\cong \bigsqcup_{i= 1}^zm_i(G/ G_i)$. We may assume $\min\{n_i, m_i\}= 0$ and $n_1> 0$. Write $n= |\Omega|= |\Lambda|$. Let $H$ be a subgroup of $G$ and let $j\in \{1,\cdots z\}$ such that $H$ is conjugate to $G_j$. Then define $m_H(\Omega)= n_j, m_H(\Lambda)= m_j$. 

Now let $M$ be a maximal subgroup of $G$.

Claim 1: $m_M(\Omega)= 0$. If not, there exists $i\in \{1, 2, \cdots, z\}$ such that $M$ conjugate to $G_i$ and $n_i\neq 0$. Thus $m_i= 0$. Note that $|\Omega^{G_i}|= |\Lambda^{G_i}|> 0$, thus there exists $\lambda\in \Lambda$ such that $G_i\leq G_\lambda$. Since $m_i= 0$, we get $G_\lambda= G$, which forces $m_1\neq 0$. Hence we get a contradiction.

Claim 2: $m_M(\Lambda)= \frac{n_1}{[\N_G(M): M]}$. Similarly, we assume $M$ is conjugate to $G_i$ for some $i$. Since $m_1= 0$, we have $|\Lambda^M|= \#\{\lambda\in \Lambda\mid G_\lambda= M\}$. If $\lambda$ is a point fixed by $M$, then the orbit $G\cdot\lambda$ is isomorphic to $G/ M\cong G/ G_i$. Since $gM\in (G/ M)^M\Leftrightarrow g\in \N_G(M)$, we get $|(G/ M)^M|= [\N_G(M): M]$. So $|\Lambda^M|= m_M(\Lambda)[\N_G(M): M]$. By claim 1, we get $|\Omega^M|= |\Omega^G|= n_1$. Combining the above equalities, we get $m_M(\Lambda)= \frac{n_1}{[\N_G(M): M]}$.

The above two claims tell us that maximal subgroup of $G$ is not the stabilizer of any point in $\Omega$. In particular, there is no orbit in $\Omega$ has prime size. 

By Theorem \ref{thm:Colon}, $\mathbb{F}_p\Omega\cong \mathbb{F}_p\Lambda$ as $\mathbb{F}_pG$-modules for any prime number $p$. Suppose the sizes of orbits of $\Omega$ and $\Lambda$ are $\{d_i\}_{i= 1}^r$ and $\{e_j\}_{j= 1}^s$, respectively. By Lemma \ref{lem: divisor}, we have $r= s$ and $\{\{v_p(d_1), \cdots, v_p(d_r)\}\}= \{\{v_p(e_1),\cdots, v_p(e_r)\}\}$ for any prime number $p$.

We define two functions: For any positive integer $m= p_1^{e_1}\cdots p_t^{e_t}$, define $\omega(m)= t, \sigma(m)= p_1^{e_1}+\cdots + p_t^{e_t}$ and $\omega(1)= \sigma(1)= 0$. Now $\sum_{i= 1}^r\omega(d_i)= \sum_{i= 1}^r\omega(e_i)$ and $\sum_{i= 1}^r\sigma(d_i)= \sum_{i= 1}^r\sigma(e_i)$. Since for any $i\in \{1,\cdots, r\}$ we have $d_i, e_i\leq 23< 2\cdot 3\cdot 5$, we see that $\omega(d_i), \omega(e_i)\leq 2$. Thus
\begin{equation*}
\begin{aligned}
\sum_i\omega(d_i)&= r- n_1+ \#\{1\leq i\leq r\mid \omega(d_i)= 2\} =: r- n_1+ u\\
\sum_i\omega(e_i)&= r+ \#\{1\leq i\leq r\mid \omega(e_i)= 2\} =: r+ v.
\end{aligned}
\end{equation*}
Then $u- v= n_1$. By direct calculation, $\sigma(x)\leq x$ for any $1\leq x\leq 23$.

Claim: $v\geq 1$. Suppose $v= 0$, then $\sum_i\sigma(e_i)= n$. But $\sum_i\sigma(e_i)= \sum_i\sigma(d_i)\leq \sum_{d_i> 1}d_i= n- n_1< n$, which is a contradiction.

Since $u- v= n_1$, we get $u\geq n_1+ 1$. Therefore $23\geq n\geq n_1+ 6u\geq 7n_1+ 6$, where the second inequality holds because the smallest positive interger with $2$ prime divisors is $6$. Thus $n_1\in \{1, 2\}$.

Case 1: $n_1= 2$. In this case, $3\leq u\leq \lfloor\frac{23- 2}{6}\rfloor= 3$. So $u= 3, v= 1$. If there is an orbit $O$ in $\Omega$ such that $|O|$ has $2$ prime divisors and $|O|> 6$, then its size is at least $10$ and $n\geq n_1+ 2\cdot 6+ 10= 24$, which is impossible. Deleting trivial orbits and the three orbits of size $6$, there are at most $23- 3\cdot 6- 2= 3$ points. But $\Omega$ has no orbit of prime size, thus $\{\{d_i\}\}_{i= 1}^r= \{\{1, 1, 6, 6, 6\}\}$. By direct calculation, we get $\{\{e_i\}\}_{i= 1}^r= \{\{6, 2, 2, 3, 3\}\}$. But $6+ 2+ 2+ 3+ 3\neq 20$, which is a contradiction.

Case 2: $n_1= 1$. If there is a normal maximal subgroup $M\leq G$ of prime index $q$, then $m_M(\Lambda)= \frac{1}{q}\in\mathbb{Z}$, which is impossible. So $G$ must be perfect.

Let $H< G$ be a proper subgroup. Suppose $[G: H]\leq 4$, then there is a nontrivial homomorphism $\psi: G\rightarrow \fS_4$. So $G/ \ker(\psi)$ is solvable and perfect, which implies $G= \ker(\psi)$ and $G= H$, a contradiction. Now we have $[G: H]\geq 5$.

Choose a nontrivial orbit in $\Omega$, say, isomorphic to $G/ H$ for some proper subgroup $H$. Since $H$ can not be maximal, there exists a maximal subgroup $M$ such that $H< M< G$. Thus $[G: H]\geq 5\cdot 2= 10$. This means the size of any nontrivial orbit in $\Omega$ is at least $10$. Since $23\geq n\geq 1+ 10(r- 1)$, we get $2\leq v+ 1= u\leq r-1\leq 2$.  So $r= 3, u= 2, v= 1$. Let $d_2, d_3$ be the sizes of the two nontrivial orbits in $\Omega$, then $(d_2, d_3)\in \{(10, 10), (10, 12)\}$. Because $\{\{v_2(d_i)\}\}_{i= 1}^r= \{\{0, 1, 1\}\}$ or $\{\{0, 1, 2\}\}$, there exists an orbit in $\Lambda$ of even size and its size is a prime power since $v= 1$. So its size is $2$ or $4$. Now we get a proper subgroup of index smaller than $5$, which is a contradiction.
\end{proof}

\begin{rem}
We mention that the bound 23 in Theorem~\ref{thm:fixpt} cannot be improved to 24.
In fact, the example in \cite[p.321]{We07} satisfies the hypotheses of Theorem~\ref{thm:fixpt};
here $G$ is a dihedral group of order 12, $|\Omega|=|\Lambda|=24$, but $\Omega$ and $\Lambda$ are not isomorphic as $G$-sets.
\end{rem}

\begin{lem}\label{lem:4.2-5}
Let a finite group $G$ act on finite sets $\Omega$ and $\Lambda$ such that $|\Omega^E|=|\Lambda^E|$ for every hypoelementary subgroup $E$ of $G$.
Assume that there exists a normal subgroup $N$ of $G$ such that $N$ is of prime order and $G/N$ is hypoelementary.
If $|\Omega^N|\le 5$, then $\Omega$ and $\Lambda$ are isomorphic as $G$-sets. \end{lem}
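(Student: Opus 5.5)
The plan is a minimal-counterexample argument, modelled on the proof of Theorem~\ref{thm:fixpt}, which reduces everything to the action of $G/N$ on the small sets $\Omega^N$ and $\Lambda^N$.

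\textbf{Reduction.} Let $G$ be a counterexample of minimal order. The hypotheses pass to subgroups. If $H<G$ and $N\le H$, then $N\trianglelefteq H$ and $H/N\le G/N$ is hypoelementary. If $N\not\le H$, then $H\cap N=1$, so $H\cong HN/N$ is hypoelementary and $\Omega\cong\Lambda$ as $H$-sets trivially. Hence by minimality $\Omega\cong\Lambda$ as $H$-sets for every proper subgroup $H$. By Lemma~\ref{lem:Burnside} it then suffices to prove $|\Omega^G|=|\Lambda^G|$. By Lemma~\ref{lem-fix-pt-quo}, $\Omega^G=(\Omega^N)^{G/N}$, and the same holds for $\Lambda$. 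So it is enough to show that $\Omega^N\cong\Lambda^N$ as $G/N$-sets, or at least that $G/N$ has equally many fixed points on them. Note also $|\Omega^N|=|\Lambda^N|$, since $N$ is cyclic, hence hypoelementary.

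\textbf{Fixed-point counts on $\Omega^N$.} Write $q=|N|$. For $E/N\le G/N$ we have $(\Omega^N)^{E/N}=\Omega^E$, so the task is to compare $|\Omega^E|$ and $|\Lambda^E|$ for subgroups $E\ge N$.
\begin{itemize}
\item If $E/N$ is $q$-hypoelementary, then $E$ is $q$-hypoelementary: the preimage of $\rO_q(E/N)$ is a normal $q$-subgroup of $E$ with cyclic $q'$-quotient. The hypothesis then gives the equality.
\item If $E$ is proper, the reduction step already gives the equality.
\end{itemize}
To get module-level information, apply Theorem~\ref{thm:Colon} to obtain $\FF_q\Omega\cong\FF_q\Lambda$ as $\FF_qG$-modules. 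Then take Brauer quotients with respect to the $q$-subgroup $N$; the Brauer quotient of a permutation module is the permutation module on the $N$-fixed points. This yields $\FF_q[\Omega^N]\cong\FF_q[\Lambda^N]$ as $\FF_q[G/N]$-modules. In particular, by Lemma~\ref{lem: divisor}, the two $G/N$-sets have the same number of orbits and matching multisets of $q$-parts of orbit sizes.

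\textbf{Small-set analysis (main obstacle).} It remains to conclude $|(\Omega^N)^{G/N}|=|(\Lambda^N)^{G/N}|$ using the following data:
\begin{itemize}
\item $|\Omega^N|\le5$;
\item the action of $G/N$ factors through a subgroup of $\fS_5$;
\item fixed-point counts agree for every proper subgroup of $G/N$ and every $q$-hypoelementary one;
\item the $\FF_q$-permutation modules are isomorphic.
\end{itemize}
Since $G/N$ is itself hypoelementary, if $G/N$ is $q$-hypoelementary we are done. So assume $G/N$ is $\ell$-hypoelementary with $\ell\ne q$, which forces $G/N\cong P\rtimes C$ with $P$ an $\ell$-group and $C$ cyclic.

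I would mimic the orbit-counting in Theorem~\ref{thm:fixpt}. Suppose the $G/N$-fixed-point counts differ. Then, as in Claims 1 and 2 there, no orbit of $\Omega^N$ (after cancelling common orbit types) has a maximal subgroup of $G/N$ as stabilizer. With at most $5$ points and a hypoelementary group, the possible nontrivial transitive actions have sizes in $\{2,3,4,5\}$, so the possible orbit decompositions form a very short list. Two more constraints then cut this list down:
\begin{itemize}
\item equal numbers of orbits together with equal total size;
\item the restriction to the proper subgroups, with the same multisets $v_q$ of orbit sizes.
\end{itemize}
I expect this case check to show that the orbit decompositions must coincide, giving the required contradiction. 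This finite case analysis is where the real work lies: one has to exclude that a non-trivial orbit of $\Omega^N$ is paired with fixed points of $\Lambda^N$ when $\ell\ne q$, and the bound $5$ is what makes it feasible.
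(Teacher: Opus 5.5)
\textbf{Verdict: there is a genuine gap.} The decisive step, the only place where $|\Omega^N|\le 5$ enters, is announced rather than carried out. Since that step is the whole content of the lemma, the proposal does not yet prove it.

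\textbf{What is correct.} Your reduction works. Minimality gives equal fixed-point counts for all proper subgroups of $G$. Lemma~\ref{lem-fix-pt-quo} then turns the problem into comparing $(\Omega^N)^{G/N}$ with $(\Lambda^N)^{G/N}$. Preimages of $q$-hypoelementary subgroups of $G/N$ are indeed $q$-hypoelementary. The Brauer-quotient computation is also correct, but via Theorem~\ref{thm:Colon} it only returns fixed-point equality for $q$-hypoelementary subgroups of $G/N$, which you already had. Your ``small-set analysis'' consists of saying you expect a case check to show the orbit decompositions coincide.

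\textbf{The missing ingredient.} For sets with at most $5$ points, the permutation character determines the $G$-set up to isomorphism. You already have equal permutation characters. Every cyclic group is $q$-hypoelementary, so whenever $N\le L$ and $L/N$ is cyclic, $L$ is hypoelementary and $|(\Omega^N)^{L/N}|=|\Omega^L|=|\Lambda^L|=|(\Lambda^N)^{L/N}|$.

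\textbf{How the paper does it.} The paper uses exactly this observation for every $H\ge N$. It then quotes \cite[Lemma~2.7]{FLZ26} to get $\Omega^N\cong\Lambda^N$ as $H/N$-sets, hence $|\Omega^H|=|\Lambda^H|$. No minimal counterexample is needed.

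\textbf{How to finish inside your framework.}
\begin{enumerate}
\item Put $X=G/N$. In the remaining case $X$ is non-cyclic and solvable.
\item Cancel common orbits, so that $\Omega^N$ has $n_1\ge1$ fixed points and $\Lambda^N$ has none.
\item The arguments of Claims~1 and~2 in Theorem~\ref{thm:fixpt} show two things. No orbit of $\Omega^N$ has a maximal subgroup of $X$ as stabilizer. For each maximal $M$, $\Lambda^N$ has $n_1/[\N_X(M):M]$ orbits of type $X/M$, containing $n_1[X:\N_X(M)]\ge n_1$ points.
\item Since $X$ is solvable, it has a normal maximal subgroup of prime index $r$, and step~3 gives $r\mid n_1$, so $n_1\ge2$.
\item A non-trivial orbit of $\Omega^N$ has non-maximal stabilizer, hence size at least $4$. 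Together with $n_1\ge 2$ fixed points this would exceed $5$ points, so $|\Omega^N|=n_1$.
\item A non-cyclic group has at least two conjugacy classes of maximal subgroups, since a proper subgroup's conjugates cannot cover the group. By step~3 this gives $|\Lambda^N|\ge 2n_1>|\Omega^N|$, a contradiction.
\end{enumerate}
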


\begin{proof}
By Lemma~\ref{lem:Burnside}, it suffices to show that $|\Omega^H|=|\Lambda^H|$ for each subgroup $H$ of \( G \). 

Let $H\le G$. If $N\nsubseteq H$, then $H\cap N=1$ and thus $H\cong HN/N\le G/N$ is hypoelementary.
Hence we assume that $N\subseteq H$.
We regard $\Omega^N$ and $\Lambda^N$ as $H/N$-sets.

Now we let $L$ be a subgroup of $H$ containing $N$ such that $L/N$ is cyclic.
Then $L$ is hypoelementary.
By Lemma~\ref{lem-fix-pt-quo}, 
\[|(\Omega^N)^{L/N}|=|\Omega^L|=|\Lambda^L|=|(\Lambda^N)^{L/N}|.\]
Since $|\Omega^N|\le 5$,
\cite[Lemma~2.7]{FLZ26} implies that $\Omega^N$
and $\Lambda^N$ are isomorphic as $H/N$-sets.
In particular, \[|\Omega^H|=|(\Omega^N)^{H/N}|=|(\Lambda^N)^{H/N}|=|\Lambda^H|.\]  
\end{proof}
 
Now we consider modular representation with respect to the prime $p$.      
Let $G$ be a finite group, and denote by $G_{\!p'}$ the set of $p$-regular elements of $G$.
For $\chi\in\Irr(G)$,
we denote the restriction of $\chi$ to $G_{\!p'}$ by $\chi^0$.
The \emph{($p$-)decomposition matrix} is $(d_{\chi,\phi})_{\chi\in\Irr(G),\,\phi\in\IBr(G)}$ defined by \[
\chi^0=\sum_{\phi\in\IBr(G)}d_{\chi,\,\phi}\,\phi\] as a class function on $G_{\!p'}$ for any $\chi\in\Irr(G)$.
For $\cB\subseteq\Irr(G)$ and $\cC\subseteq\IBr(G)$ we set \[\Dec_{\cB,\,\cC}:=(d_{\chi,\,\phi})_{\chi\in\cB,\,\phi\in\cC}.\] 
If $\Dec_{\cB,\,\cC}$ is unimodular, then we say that $\cB$ is a \emph{basic set}.

For a finite group $X$, then we denote by $\Lin(X)$ the group of linear characters of $X$, and by $\Lin_{p'}(X)$ the Hall $p'$-subgroup of $\Lin(X)$.
By construction, $\Lin_{p'}(X)$ can be identical to group of linear Brauer characters of $X$, and 
acts on $\Irr(X)$ and $\IBr(X)$ by multiplication. 

\begin{cor}\label{cor:bas-set-equ}
Let $A$ be a finite group, and $N\le G$ be normal subgroups of $A$ such that $G/N$ is abelian.
Let $H$ be a subgroup of $\Lin_{p'}(G/N)\rtimes (A/G)$.
Suppose that $\cB\subseteq\Irr(G)$ and $\cC\subseteq\IBr(G)$ are $H$-stable sets such that \(\Dec_{\cB,\,\cC}\) is square and unimodular.
Assume that one of the following holds.
\begin{enumerate}[\rm(1)]
\item $H$ is hypoelementary.
\item $|\cB|\le 23$.
\item There is a normal subgroup $K$ of $H$ such that $K$ is of prime order, $H/K$ is hypoelementary and $|\cB^K|\le 5$.
\end{enumerate}
Then there exists a $H$-equivariant bijection $\cB$ and $\cC$.
\end{cor}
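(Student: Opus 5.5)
We first set up the action. The group $\Lin_{p'}(G/N)\rtimes (A/G)$ acts on $\Irr(G)$ and on $\IBr(G)$. A linear $p'$-character $\lambda$ of $G/N$, inflated to $G$, acts by multiplication, and $A$ acts by conjugation, with $G$ acting trivially. These two actions are compatible with the semidirect product structure, because $A$ normalizes $N$ and so permutes $\Lin_{p'}(G/N)$. Since $H$ stabilizes $\cB$ and $\cC$, both are finite $H$-sets, and $|\cB|=|\cC|$ because $\Dec_{\cB,\cC}$ is square. The key observation is that $U=\Dec_{\cB,\cC}$ is an $H$-invariant integer matrix between $\cB$ and $\cC$ in the sense of Definition~\ref{defn:equiv-mtx}. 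For $a\in A$ we have $d_{\chi^a,\phi^a}=d_{\chi,\phi}$, since reduction modulo $p$ commutes with conjugation. For $\lambda\in\Lin_{p'}(G/N)$ we have $(\lambda\chi)^0=\lambda^0\chi^0=\sum_\phi d_{\chi,\phi}\,\lambda^0\phi$, and $\lambda^0\phi\in\IBr(G)$. Here the identification of $\Lin_{p'}$ with the linear Brauer characters is used: multiplication by $\lambda^0$ is a bijection on $\IBr(G)$. Hence $d_{\lambda\chi,\lambda\phi}=d_{\chi,\phi}$, and so $u_{h\cdot\chi,h\cdot\phi}=u_{\chi,\phi}$ for all $h\in H$.

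Next we use unimodularity. Since $\det U=\pm1$, no prime $\ell$ divides $\det U$. Theorem~\ref{thm:Colon}, applied to $H$ for every prime $\ell$, then gives $|\cB^E|=|\cC^E|$ for every $\ell$-hypoelementary subgroup $E$ of $H$, for all $\ell$. In other words, the fixed-point counts agree on every hypoelementary subgroup of $H$.

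The proof then splits along the three hypotheses.
\begin{enumerate}[\rm(1)]
\item If $H$ is $\ell$-hypoelementary for some $\ell$, Corollary~\ref{cor:hypoelementary} applied with that $\ell$ shows that $\cB\cong\cC$ as $H$-sets.
\item If $|\cB|\le 23$, Theorem~\ref{thm:fixpt} applies directly, since the hypoelementary fixed-point condition has been verified.
\item Here we apply Lemma~\ref{lem:4.2-5} with $G$ replaced by $H$, $N$ by $K$, $\Omega=\cB$ and $\Lambda=\cC$. The hypotheses are exactly that $K$ has prime order, $H/K$ is hypoelementary and $|\cB^K|\le 5$.
\end{enumerate}
In each case an isomorphism of $H$-sets is exactly an $H$-equivariant bijection $\cB\to\cC$.

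The main obstacle is the first step, showing that the decomposition matrix is genuinely $H$-invariant for the combined action. The point needing care is that multiplying by $\lambda\in\Lin_{p'}(G/N)$ corresponds to multiplying Brauer characters by $\lambda^0$, and that these actions together with the $A$-action fit into the semidirect product. Once this is in place, the rest is a direct application of the results stated earlier in this section.
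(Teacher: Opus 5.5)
Your proposal is correct and follows the paper's proof. The steps match: $H$-invariance of $\Dec_{\cB,\cC}$, then unimodularity with Theorem~\ref{thm:Colon} to equate fixed-point counts on all hypoelementary subgroups of $H$, then Corollary~\ref{cor:hypoelementary}, Theorem~\ref{thm:fixpt} and Lemma~\ref{lem:4.2-5} for cases (1)--(3). Your explicit check that decomposition numbers are invariant under $A$-conjugation and under multiplication by $\lambda^0$ fills in the step the paper only asserts ("First note that\dots").
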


\begin{proof}
First note that \(\Dec_{\cB,\,\cC}\) is a $H$-invariant integer matrix $U$ between  $\cB$ and $\cC$.
As \(\Dec_{\cB,\,\cC}\) is unimodular, it has determinant not divisible by any prime.
By Theorem~\ref{thm:Colon}, every hypoelementary subgroup of $H$ has equal numbers of fixed points in $\cB$ and $\cC$.
Therefore, this assertion follows by Corollary~\ref{cor:hypoelementary}, Theorem~\ref{thm:fixpt} and Lemma~\ref{lem:4.2-5} immediately.
\end{proof}


\section{Sizes of Lusztig series for type $\ty D_4$ and $\ty E_6$}\label{sec:size-series}

Let $q$ be a power of a prime $r$. We let $\overline\FF_q$ be an algebraic closure of the finite field $\FF_q$ of $q$ elements. 
Suppose that $\bG$ is a connected reductive algebraic group over $\overline\FF_q$ and $F:\bG\to\bG$ is a Frobenius endomorphism endowing $\bG$ with an $\FF_q$-structure. Let $\bG^*$ be Langlands dual to $\bG$ with corresponding Frobenius endomorphism also denoted $F$. 

We consider the modular representations with respect to the prime $p$.
From now on, we always assume that $p\nmid q$.

We recall the following result of Geck and Hiss; see also \cite[Thm.~14.4]{CE04}.

\begin{thm}[Geck--Hiss \cite{GH91}, Geck \cite{Ge93}]\label{thm:basic-set}
Assume $p$ is good for $\bG$, and does not divide the order of $(\Z(\bG)/\Z^\circ(\bG))^F$. Then $\cE(\bG^F,p')$ is a basic set for $\bG^F$.
\end{thm}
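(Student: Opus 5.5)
The plan is to prove the two halves of the basic-set property separately, working one Lusztig series at a time. First, \emph{spanning}: the restrictions $\chi^0$, $\chi\in\cE(\bG^F,p')$, generate over $\ZZ$ the lattice of all generalized Brauer characters. Second, \emph{linear independence}: $|\cE(\bG^F,p')|$ equals the number of $p$-regular classes of $\bG^F$. Together these show that $\Dec_{\cE(\bG^F,p'),\IBr(\bG^F)}$ is a square integer matrix whose rows span $\ZZ^{\IBr(\bG^F)}$, hence unimodular.

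\textbf{Reduction to single series.} By Broué--Michel, for each semisimple $p'$-element $s\in\bG^{*F}$ the union $\cE_p(\bG^F,s)=\bigcup_t \cE(\bG^F,st)$, with $t$ running over $p$-elements of $\C_{\bG^*}(s)^F$, is a union of $p$-blocks. So it suffices to treat each $s$ separately and show that $\ZZ\cE(\bG^F,s)^0=\ZZ\cE_p(\bG^F,s)^0$, and that $\cE(\bG^F,s)^0$ is linearly independent.

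\textbf{Step 1: the counting argument.} I count $p$-regular classes as $\sum_s(\text{number of unipotent classes of }\C_{\bG}(s)^F)$, with $s$ running over semisimple $p'$-classes of $\bG^F$. I compare this with $|\cE(\bG^F,p')|=\sum_{s^*}|\cE(\bG^F,s^*)|$, the sum over semisimple $p'$-classes of $\bG^{*F}$. Via Lusztig's Jordan decomposition, $|\cE(\bG^F,s^*)|$ is the number of unipotent characters of $\C_{\bG^*}(s^*)^F$, counted with the appropriate action of the component group $A(s^*)$. Then I use the duality between semisimple classes of $\bG^F$ and $\bG^{*F}$, together with the equality (up to the component group) between the number of unipotent classes and the number of unipotent characters of the centralizers. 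This is exactly where the two hypotheses enter. First, $p$ good guarantees that the centralizers involved, and the matching of unipotent classes with unipotent characters, behave uniformly. Second, $p\nmid|(\Z(\bG)/\Z^\circ(\bG))^F|$ guarantees that the component groups of $p'$-centralizers match on both sides.

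\textbf{Step 2: spanning.} The character formula for Deligne--Lusztig characters shows that $R_{\mathbf T}^{\bG}(\theta)^0$ depends only on the $p'$-part of $\theta$. Hence every uniform projection of $\chi\in\cE(\bG^F,st)$, restricted to $p$-regular elements, lies in the $\mathbb{Q}$-span of $\cE(\bG^F,s)^0$. Combined with Step 1, this gives the rational statement.

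\textbf{Main obstacle: integrality.} The hard part is upgrading the rational statement to integrality, i.e.\ showing that the rows span over $\ZZ$ rather than only over $\mathbb Q$. I would try to produce enough projective characters supported in $\cE_p(\bG^F,s)$, taking them as $\ZZ$-combinations of characters in $\cE(\bG^F,s)$. For this I would use generalized Gelfand--Graev characters, in Kawanaka's theory valid for $p$ good, and Lusztig induction of projectives from the dual Levi $\C_{\bG^*}^\circ(s)$ (or a minimal Levi containing it). The aim is to obtain a projective character whose decomposition on $\cE(\bG^F,s)$ is unitriangular with respect to the unipotent-support ordering. This produces a $\ZZ$-form of the Cartan-type pairing and forces the determinant to be $\pm1$.
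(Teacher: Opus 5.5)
The paper gives no proof of this statement; it quotes it from \cite{GH91,Ge93}. Measured against that argument, your proposal has genuine gaps in both halves.

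\emph{Spanning.} Your reduction to single series is fine, and so is the observation $R_{\mathbf T}^{\bG}(\theta)^0=R_{\mathbf T}^{\bG}(\theta_{p'})^0$. The problem is that Step~2 only controls uniform projections, and $\chi\in\cE(\bG^F,st)$ is in general not uniform. An example is any $\chi$ Jordan-corresponding to a character of type $\theta_{10}$ of a $\ty B_2$-factor of $\C_{\bG^*}(st)^F$. So not even the rational statement follows, and a cardinality count (Step~1) says nothing about where $\chi^0$ lies. Your plan for integrality is to build projectives from generalized Gelfand--Graev characters that are unitriangular on $\cE(\bG^F,s)$. That amounts to unitriangularity of the decomposition matrix, which is known only in special cases (e.g.\ unipotent blocks) and, as this paper itself stresses, is open in general.

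The missing idea is that integral spanning is easy. For $p$ good and $t$ a $p$-element of $\bG^{*F}$, $\mathbf L^*:=\C^\circ_{\bG^*}(t)$ is a Levi subgroup. The hypothesis $p\nmid|(\Z(\bG)/\Z^\circ(\bG))^F|$ forces $\C_{\bG^*}(t)^F=\mathbf L^{*F}$: the component group of $\C_{\bG^*}(t)$ is a $p$-group whose $F$-fixed points embed in a group of that order. Lusztig's Jordan decomposition then gives $\chi=\pm R_{\mathbf L}^{\bG}(\hat t\psi)$ with $\psi\in\cE(\mathbf L^F,s)$. By the character formula, $R_{\mathbf L}^{\bG}(\phi)^0$ depends only on $\phi^0$, and $\hat t$ is trivial on $p$-regular elements. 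Hence
\[\chi^0=\pm R_{\mathbf L}^{\bG}(\psi)^0\in\ZZ\cE(\bG^F,s)^0 .\]
Since the decomposition map is surjective, this yields $\ZZ\cE(\bG^F,p')^0=\ZZ\IBr(\bG^F)$. So unimodularity is \emph{equivalent} to $|\cE(\bG^F,p')|=|\IBr(\bG^F)|$, and there is no separate integrality obstacle at all.

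\emph{Counting.} That equality is the substantive part of \cite{GH91,Ge93}, and Step~1 does not prove it.
\begin{itemize}
\item There is no termwise matching between unipotent classes of $\C_{\bG}(s)^F$ and unipotent characters of the dual centralizer. Already for $s=1$, $Sp_4(q)$ with $q$ odd has $7$ unipotent classes, while $Sp_4(q)$ and its dual $SO_5(q)$ have $6$ unipotent characters.
\item There is no bijection between semisimple classes of $\bG^F$ and of $\bG^{*F}$ with dual centralizers: $SL_2(q)$ has $q$ of them and $PGL_2(q)$ has $q+1$. Duality pairs semisimple classes of $\bG^{*F}$ with geometric conjugacy classes of pairs $(\mathbf T,\theta)$, not with classes of $\bG^F$.
\item You misplace the central hypothesis: it concerns $p$-elements, not component groups of $p'$-centralizers. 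Take $SL_2(q)$ with $q$ odd and $p=2$, which is good for type $\ty A$ but divides $|\Z(\bG)^F|$. All $2'$-centralizers in $PGL_2$ are connected, yet $|\IBr(SL_2(q))|=|\cE(SL_2(q),2')|+1$. The defect comes from the $2$-elements of $PGL_2(q)$ with disconnected centralizer.
\end{itemize}
So the count needs a genuinely global argument, and your proposal does not contain one.
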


In what follows, we assume that that $\bG$ is simple and of simply connected type.
Let $\bG\hookrightarrow\tbG$ be a regular embedding with Frobenius endomorphism also denoted as $F$.
Write $\pi:\tbG^*\to\bG^*$ the corresponding epimorphism.
The group $\tbG^F\rtimes D$ is well-defined and induces all automorphisms of $G$, see~\cite[Thm.~2.5.1]{GLS98}. 
For a explicit description of $\tbG$ and $D$, we refer to \cite[\S2.B]{MS16}.
The following is called the \emph{$\mathrm{A}(\!\infty\!)$ condition} by Cabanes and Sp\"ath \cite[Definition 2.2]{CS19} and played an instrumental role in the proof of the McKay conjecture.

\begin{thm}[Cabanes--Sp\"ath {\cite{CS17a,CS17b,CS19}}, Sp\"ath {\cite{Sp23,Sp25}}]
\label{ord-Ainfty}
Every $\chi\in \Irr(\bG^F)$ has a $\tbG^F$-conjugate $\chi_0$ such that $(\tbG^FD)_{\chi_0}=\tbG^F_{\chi_0}D_{\chi_0}$ and $\chi_0$ extends to $\bG^FD_{\chi_0}$.
\end{thm}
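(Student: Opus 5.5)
This statement is the ordinary $A(\infty)$ condition of the introduction, rephrased in terms of the regular embedding $\bG\hookrightarrow\tbG$ and the group $D$ of field and graph automorphisms. I will not reprove it from scratch. The plan is to assemble it from the cited literature, organised by the type of $\bG$, and to check that the formulations used there agree with the one stated here.

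\textbf{Step 1: translate the formulation.} I will use that $\tbG^F\rtimes D$ induces all automorphisms of $\bG^F$ \cite[Thm.~2.5.1]{GLS98}, and that $\tbG^F$ induces exactly the diagonal automorphisms. Hence the condition on $(\tbG^FD)_{\chi_0}$ is equivalent to the condition on the stabilizer of $\chi_0$ in $\Out(G)$ having the form $C'D'$. Extending $\chi_0$ to $\bG^F D_{\chi_0}$ is equivalent to extending it to $GD'$. Here I use the explicit descriptions of $\tbG$ and $D$ in \cite[\S2.B]{MS16}, which are the same models used in \cite{CS17a,CS17b,CS19,Sp23,Sp25}.

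\textbf{Step 2: settle the easy cases.} When $\tbG^F=\bG^F\Z(\tbG^F)$, or more generally when $\Out(G)$ is small enough that the stabilizer automatically has the form $C'D'$, the statement reduces to extendibility of $\chi_0$ to $\bG^FD_{\chi_0}$. Malle and Späth obtained this from Jordan decomposition together with Lusztig's results on unipotent characters, and their arguments are collected in the cited works. For types $\ty A$, $\ty C$ and $\ty E_6$ (or $\ty B$, $\ty E_7$), I will invoke \cite{CS17a,CS17b,CS19}. These papers produce a $\tbG^F$-conjugate $\chi_0$ with the required stabilizer decomposition by analysing Lusztig series $\cE(\bG^F,s)$ and the action of $\tbG^FD$ on semisimple classes of $\bG^{*F}$. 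They obtain extendibility via Harish-Chandra induction from Levi subgroups stable under $D$.

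\textbf{Step 3: the main obstacle, type $\ty D$.} The hardest case is type $\ty D_n$, including $\ty D_4$ with triality, where $\Out(G)$ is non-cyclic and the stabilizer decomposition genuinely has to be proved. Here I will quote Späth \cite{Sp23,Sp25}. Her approach combines Harish-Chandra theory with a careful study of characters of $\bG^F$ lying over $\tbG^F$-orbits whose stabilizers in $\tbG^F$ have index $2$ or $4$. For the present paper it suffices to cite \cite{Sp25} for the final case and to record that the conclusion holds for every simple simply connected $\bG$. In the written proof I would say explicitly that no new argument is needed; the theorem is stated here only to be fed, via the Geck--Hiss basic set of Theorem~\ref{thm:basic-set} and Corollary~\ref{cor:bas-set-equ}, into the Brauer version in later sections.
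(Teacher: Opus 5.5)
Your plan matches the paper. Theorem~\ref{ord-Ainfty} is not proved there; it is quoted directly from \cite{CS17a,CS17b,CS19,Sp23,Sp25}, and assembling it type by type from those sources is what is intended. The cases where $\tbG^F$ acts trivially on $\Irr(\bG^F)$ and $D$ is cyclic are immediate, because invariant characters extend over cyclic quotients, and your summaries of how the cited papers argue are not needed.
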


\begin{lem}\label{lem:E6-series}
Suppose that $(\bG,F)$ is of split type $\ty E_6$, and $s\in{\bG^*}^F$ is a semisimple element such that the group $\C_{\bG^*}(s)^F/\C_{\bG^*}^\circ(s)^F$ is nontrivial. Let $\tilde s$ be a semisimple element of ${\tbG}^{*F}$ such that $\pi(\tilde s)=s$.
Then the following holds.
\begin{enumerate}[\rm(1)]
\item $|\cE(\bG^F,s)|\le23$ or $|\cE(\bG^F,s)^{\tbG^F}|\le 5$.
\item Let $L$ be the set stabilizer of $\cE(\tbG^F,\tilde s)$ in $\Lin(\tbG^F/\bG^F)$. Then $|\cE(\tbG^F,\tilde s)|\le23$ or $|\cE(\tbG^F,\tilde s)^{L}|\le 5$.
\end{enumerate}
\end{lem}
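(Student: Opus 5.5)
The plan is to use Lusztig's Jordan decomposition together with the classification of semisimple classes of split $\ty E_6$ with disconnected centralizer. Since $\C_{\bG^*}(s)/\C_{\bG^*}^\circ(s)$ embeds in $\Z(\bG)\cong \ZZ/3$ (for $\bG$ simply connected of type $\ty E_6$), the hypothesis forces $A(s):=\C_{\bG^*}(s)^F/\C^\circ_{\bG^*}(s)^F$ to have order $3$. This requires $3\mid q-1$ in the split case. It also forces the root system of $\C^\circ_{\bG^*}(s)$ to be stable under an element of order $3$ of the component group; the possibilities are listed in the tables of semisimple classes of $\ty E_6$ (e.g.\ Fleischmann--Janiszczak, or the CHEVIE data). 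The connected centralizer types with an order-$3$ symmetry are essentially $\ty A_2^3$, $\ty D_4\cdot T_2$, $\ty A_1^3\cdot T_3$, $\ty A_2\cdot T_4$-like twisted forms, and the torus $T_6$, each with possibly twisted $F$-forms. (I expect to read these off rather than rederive them.)

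For (2), I would use $|\cE(\tbG^F,\tilde s)|=|\Irr(W^\circ(\tilde s)\text{-unipotent})|$, namely the number of unipotent characters of $\C_{\tbG^*}(\tilde s)^F$, which is connected. For (1), the characters of $\cE(\bG^F,s)$ are the constituents of restrictions from $\cE(\tbG^F,\tilde s)$. A unipotent character $\psi$ of $\C^\circ(s)^F$ contributes $|A(s)_\psi|$ characters, and the orbit of $A(s)$ on $\psi$ corresponds to a $\tbG^F$-orbit. Hence
\[
|\cE(\bG^F,s)| = \sum_{\psi} |A(s)_\psi|/[A(s):A(s)_\psi]\cdot\ldots
\]
More simply, the count is $3\cdot\#\{A(s)\text{-fixed unipotent }\psi\}+\tfrac13\#\{\text{non-fixed}\}$. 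Moreover, $\cE(\bG^F,s)^{\tbG^F}$ consists of the restrictions of non-fixed $\psi$, so $|\cE(\bG^F,s)^{\tbG^F}|=\tfrac13\#\{\psi \text{ not } A(s)\text{-fixed}\}$.

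The key step is a case-by-case count over the types above, computing the number of unipotent characters of $\C^\circ(s)^F$ and how $A(s)$ permutes them. It cyclically permutes the three $\ty A_2$ factors in $\ty A_2^3$, acts by triality on $\ty D_4$, and so on. For $\ty A_2^3$ (split) there are $27$ unipotent characters, $3$ of them fixed, giving $9+8=17\le 23$. For the twisted form ${}^3$-type, $\ty A_2(q^3)$, there are $3$ characters. For $\ty D_4$ there are $14$ unipotent characters; triality fixes $12$ and permutes $2$... The counts in each case should be at most about $40$, giving $|\cE(\bG^F,s)|\le 23$ directly in most cases. Where the count exceeds $23$, I would instead check that the number of $\tbG^F$-invariant members (from fixed $\psi$, all of which are non-invariant in $\bG$, i.e.\ split) is handled by the second alternative. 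In (2), $L$ acts on $\cE(\tbG^F,\tilde s)$ via multiplication by $\hat z$, which corresponds to the $A(s)$-action on unipotent characters. So $|\cE(\tbG^F,\tilde s)^L|$ is the number of $A(s)$-fixed $\psi$, and the same table gives either a total of at most $23$ or at most $5$ fixed characters.

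The main obstacle is making the case list complete, including the non-split $F$-forms of the centralizers, such as those where $F$ composed with the order-$3$ element acts on $\ty A_2^3$ as $\ty A_2(q^3)$, or on $\ty D_4$ as ${}^3\ty D_4$. I would handle this by going through the $\ty E_6$ semisimple class tables restricted to classes with $|A(s)|=3$, and recording for each the unipotent character count and the fixed-point count under the diagram action.
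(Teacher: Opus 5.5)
Your framework is the same as the paper's. With $k$ the number of $A(s)$-fixed unipotent characters of $\C^\circ_{\bG^*}(s)^F$ and $l$ the number of $3$-orbits, Jordan decomposition and Clifford theory give
$|\cE(\bG^F,s)|=3k+l$, $|\cE(\bG^F,s)^{\tbG^F}|=l$, $|\cE(\tbG^F,\tilde s)|=k+3l$ and $|\cE(\tbG^F,\tilde s)^L|=k$.
Your $\ty A_2^3$ counts ($17$, and $3$ characters for the $\ty A_2(q^3)$ form) agree with the paper's. One sentence of yours is garbled, though. The $\tbG^F$-invariant members of $\cE(\bG^F,s)$ come from the \emph{non}-fixed $\psi$, whose restrictions stay irreducible. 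The fixed $\psi$ give three $\tbG^F$-conjugate, non-invariant constituents. Your displayed formula for $|\cE(\bG^F,s)^{\tbG^F}|$ is nonetheless correct.

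The gap is that the case analysis, which is the entire content of the lemma, is not carried out, and the data you do give is wrong in two places.

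\textbf{The list of types.} The $\ZZ/3$-stable proper subsets of nodes of the extended $\ty E_6$ diagram give exactly the types $1$, $\ty A_1$, $\ty A_1^3$, $\ty A_1^4$, $\ty D_4$, $\ty A_2^3$ for $\C^\circ_{\bG^*}(s)$, up to central tori. This is the paper's list. There is no $\ty A_2T_4$ case. You also omit $\ty A_1T_5$ and $\ty A_1^4T_2$, and the latter is the largest case after $\ty A_2^3$.

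\textbf{The $\ty D_4$ count.} ``Triality fixes $12$ and permutes $2$'' is impossible, since orbits of a group of order $3$ have size $1$ or $3$. The correct data are $k=8$, $l=2$ for the split form and $k=8$, $l=0$ for the ${}^3\ty D_4$ form. Then $|\cE(\bG^F,s)|=26$, resp.\ $24$, exceeds $23$. So $\ty D_4$ is precisely the case where the second alternative of (1) is needed.

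\textbf{The missing idea.} The paper's uniform bound makes your planned table lookup over all $F$-forms unnecessary. Outside type $\ty A_2^3$, every $F$-form satisfies $|\mathcal U_s|=k+3l\le 16$. This gives $|\cE(\tbG^F,\tilde s)|\le 16$, which settles (2), and $l\le 5$, which settles (1), without computing any fixed points. Only $\ty A_2^3$ then needs an explicit count, and you have that one right.
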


\begin{proof}
The group $\C_{\bG^*}(s)^F/\C_{\bG^*}^\circ(s)^F$ is of order 3 and the group $\C_{\bG^*}(s)$ is disconnected, 
since $\C_{\bG^*}(s)^F/\C_{\bG^*}^\circ(s)^F$ is nontrivial.
We write $\mathcal U_s$ be the set of unipotent characters of $\C_{\bG^*}^\circ(s)^F$.
Let $k=|{\mathcal U_s}^{\C_{\bG^*}(s)^F}|$, and $l$ be the number of $\C_{\bG^*}(s)^F$-orbits on $\mathcal U_s\setminus {\mathcal U_s}^{\C_{\bG^*}(s)^F}$.
Hence $|\mathcal U_s|=k+3l$.
By Lusztig's Jordan decomposition for disconnected groups (cf. \cite{Lu88}, see also \cite[Thm.~2.6.21]{GM20}) and Clifford theory, we have that $|\cE(\bG^F,s)|=3k+l$, $|\cE(\bG^F,s)^{\tbG^F}|=l$, $|\cE(\tbG^F,\tilde s)|=k+3l$ and $|\cE(\tbG^F,\tilde s)^{L}|=k$.

Since $\C_{\bG^*}(s)$ is disconnected, by \cite[\S~3.B]{Bo05} and \cite[Thm.~2.2(3)]{Br12}, the group $\C_{\bG^*}^\circ(s)$ has one of the types
1, $\ty A_1$, $\ty A_1^3$, $\ty A_1^4$, $\ty D_4$, $\ty A_2^3$.
The upper bound number of unipotent characters are as in the following table.
\begin{table}[htbp]
\[\begin{array}{c|rrrrrr}
\text{type}&1&\ty A_1&\ty A_1^3&\ty A_1^4&\ty D_4&\ty A_2^3\\
\hline
|\mathcal U_s|&1&2&8&16&14&27.
\end{array}\]
\end{table}
Recall that the numbers for $\ty A_1$ and $\ty A_2$ are the numbers of partitions of 2 and 3, and the group of type $\ty D_4$ has 14 unipotent characters (see, e.g., \cite[Table~2]{Ma20}).

If $\C_{\bG^*}(s)$ is not of type $\ty A_2^3$, then $|\mathcal U_s|=k+3l\le 16$ and thus $|\cE(\tbG^F,\tilde s)|\le 16$.
Moreover, $l\le 5$, whence $|\cE(\bG^F,s)^{\tbG^F}|\le 5$.

Therefore, we assume that $\C_{\bG^*}(s)$ is of type $\ty A_2^3$.
The group $\C_{\bG^*}(s)^F/\C_{\bG^*}^\circ(s)^F$ cyclically permutes the three 
$\ty A_2$ factors.  Since $\C_{\bG^*}(s)^F/\C_{\bG^*}^\circ(s)^F$ is fixed by $F$, the permutation of the factors induced by $F$ commutes with this three cycle.  It is therefore either the identity or a three cycle.  If $F$ fixes the factors, then
$|\mathcal U_s|=27$, $k=3$ and $l=8$.
So,
$|\cE(\bG^F,s)|=17$,
If $F$ cyclically permutes the factors, then the rational group has type $\ty A_2(q^3)$, and $|\mathcal U_s|=3$, $k=3$ and $l=0$. Therefore,
$|\cE(\bG^F,s)|=9$ and $|\cE(\tbG^F,\tilde s)|=3$.
\end{proof}

\begin{lem}\label{lem:D4-series}
	Suppose that $(\bG,F)$ is of split type $\ty D_4$, and $s\in{\bG^*}^F$ is a semisimple element. Let $\tilde s$ be a semisimple element of ${\tbG}^{*F}$ such that $\pi(\tilde s)=s$.
	Then $|\cE(\bG^F,s)|\le16$ and $|\cE(\tbG^F,\tilde s)|\le16$.
\end{lem}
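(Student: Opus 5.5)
Following the proof of Lemma~\ref{lem:E6-series}, we will use Lusztig's Jordan decomposition, possibly for disconnected centralisers, and reduce the statement to counting unipotent characters of $\C_{\bG^*}^\circ(s)^F$. Here $\bG$ is simply connected of type $\ty D_4$, so $\bG^*$ is adjoint of type $\ty D_4$ and $\Z(\bG)^F$ has order $4$ (split case). Hence $A(s):=\C_{\bG^*}(s)^F/\C_{\bG^*}^\circ(s)^F$ is isomorphic to a subgroup of $\ZZ/2\times\ZZ/2$.

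\textbf{The group $\tbG^F$.} By Jordan decomposition in the connected-centre group $\tbG$, the set $\cE(\tbG^F,\tilde s)$ is in bijection with the unipotent characters of $\C_{\tbG^*}(\tilde s)^F$. This centraliser is connected, and it has the same semisimple type as $\C_{\bG^*}^\circ(s)$. So we need to bound the number $|\mathcal U_s|$ of unipotent characters of $\C^\circ_{\bG^*}(s)^F$ by $16$.

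\textbf{Possible centraliser types.} Use Borel--de~Siebenthal (or the tables of \cite{Bo05}) to list the root subsystems of $\ty D_4$ that arise as $\C^\circ_{\bG^*}(s)$. These are $\ty D_4$, $\ty A_3$, $\ty A_1^4$, $\ty A_2$, $\ty A_1^3$, $\ty A_1^2$, $\ty A_1$, products of these with tori, and the trivial type. For each, $F$ may permute the simple factors, and permuting factors only lowers the count. The maximum numbers of unipotent characters (computed via partitions for type $\ty A$ and by \cite[Table~2]{Ma20} for $\ty D_4$) are:
\begin{itemize}
\item $\ty D_4$: $14$;
\item $\ty A_3$: $5$;
\item $\ty A_1^4$: $16$;
\item $\ty A_1^3$: $8$;
\item $\ty A_2$: $3$;
\item $\ty A_1^2$: $4$;
\item $\ty A_1$: $2$;
\item trivial type: $1$.
\end{itemize}
So in every case $|\mathcal U_s|\le 16$, which gives $|\cE(\tbG^F,\tilde s)|\le16$.

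\textbf{The group $\bG^F$.} If $A(s)=1$, Jordan decomposition gives $|\cE(\bG^F,s)|=|\mathcal U_s|\le 16$. If $A(s)\neq1$, Clifford theory between $\bG^F$ and $\tbG^F$, combined with the disconnected Jordan decomposition (\cite{Lu88}, \cite[Thm.~2.6.21]{GM20}), gives
\[|\cE(\bG^F,s)|=\sum_{\mathcal O}\frac{|A(s)|}{|\mathcal O|},\]
where $\mathcal O$ runs over the $A(s)$-orbits on $\mathcal U_s$. Here we use that the stabiliser of a unipotent character is abelian, so that the multiplicities are $1$.

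\textbf{Main obstacle.} If $|A(s)|=4$, this sum could reach $4|\mathcal U_s|$, so this formula alone does not give the bound. The step I expect to be hardest is therefore a case check for disconnected centralisers. From \cite[\S3.B]{Bo05}, disconnected centralisers in adjoint $\ty D_4$ have identity component of type $\ty A_1^4$, $\ty A_1^2$ (times a torus), $\ty A_3$, or a torus, and $A(s)$ acts on the factors. For instance:
\begin{itemize}
\item For $\ty A_1^4$ with $A(s)\cong(\ZZ/2)^2$ acting regularly on the four factors, the fixed points on $\mathcal U_s$ are the $2$ characters constant across the factors. The other $14$ characters split into orbits of size $2$ or $4$, giving well under $16$ after weighting.
\item For $\ty A_1^2\times T$ with $|A(s)|=2$, at most $4\cdot 2=8$.
\item For $\ty A_3$ with $|A(s)|=2$, at most $10$.
\item For the torus case with $|A(s)|=4$, exactly $4$.
\end{itemize}
Each case is bounded by $16$ by this orbit count, which would complete the proof.
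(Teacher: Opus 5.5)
\textbf{Verdict.} Your strategy is the paper's: Jordan decomposition, the list of centraliser types, and the orbit formula $|\cE(\bG^F,s)|=\sum_{\mathcal O}|A(s)|/|\mathcal O|$. The $\tbG^F$ part is fine. However, the one case that actually needs work is not done, and what you assert about it is false, so the proof has a gap there.

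\textbf{The case $\ty A_1^4$.} Take $\C^\circ_{\bG^*}(s)$ of type $\ty A_1^4$, $F$ acting trivially on the factors, and $A(s)\cong(\ZZ/2)^2$ acting regularly on them. Knowing only that the $14$ non-fixed characters lie in orbits of length $2$ or $4$ gives no bound. With $a$ orbits of length $2$ and $b$ of length $4$, the weighted sum is $8+2a+b$ subject to $2a+4b=14$, which can be as large as $22$. You must determine the orbit structure. Identify $\mathcal U_s$ with $\{1,\operatorname{St}\}^4$.
\begin{itemize}
\item Each involution of $A(s)$, say the one acting as $(12)(34)$ on the factors, fixes exactly $4$ elements. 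Two of these are $A(s)$-fixed.
\item Hence there are exactly $3$ orbits of length $2$: $\{(1,1,\operatorname{St},\operatorname{St}),(\operatorname{St},\operatorname{St},1,1)\}$ and its two analogues.
\item The remaining $8$ characters, those with exactly one or exactly three Steinberg factors, form $2$ regular orbits.
\end{itemize}
So $|\cE(\bG^F,s)|=2\cdot 4+3\cdot 2+2\cdot 1=16$. This case gives exactly $16$, the extremal value of the lemma, not ``well under $16$''; this is precisely the computation the paper carries out.

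\textbf{Smaller omissions (harmless).}
\begin{itemize}
\item Your list of disconnected cases omits type $\ty A_1$ times a torus. It occurs with $A(s)$ of order $2$ and of order $4$, for instance when the Kac coordinates vanish only at the central node of the extended $\ty D_4$ diagram.
\item You also do not treat $\ty A_1^4$ when $F$ permutes the factors, or when $A(s)=\C_{\bG^*}(s)^F/\C^\circ_{\bG^*}(s)^F$ is smaller than the algebraic component group.
\end{itemize}
The clean way to dispose of all of these, which is how the paper organises it, is the crude bound $|\cE(\bG^F,s)|\le |A(s)|\cdot|\mathcal U_s|$. This is at most $16$ unless $|A(s)|=2$ and $|\mathcal U_s|>8$, or $|A(s)|=4$ and $|\mathcal U_s|>4$. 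Both exceptions force type $\ty A_1^4$ with $F$ trivial on the factors, where you do the explicit count. In the Klein-four case this is the count above, giving $16$. In the order-$2$ case it gives $2k+l=2\cdot 4+6=14$.
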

	
\begin{proof}
	We write $\mathcal U_s$ be the set of unipotent characters of $\C_{\bG^*}^\circ(s)^F$.

Suppose first that $\C_{\bG^*}(s)^F/\C_{\bG^*}^\circ(s)^F$ is trivial.  
Then $|\cE(\bG^F,s)|=|\cE(\tbG^F,\tilde s)|=|\mathcal U_s|$.
The possible types of $\C_{\bG^*}^\circ(s)$ are 1, $\ty A_1$, $\ty A_1^2$, $\ty A_1^3$, $\ty A_1^4$, $\ty A_2$, $\ty A_3$, $\ty D_4$.
The maximal numbers of unipotent characters are are as in the following table.
\begin{table}[htbp]
\[\begin{array}{c|rrrrrrrr}
\text{type}&1&\ty A_1&\ty A_1^2&\ty A_1^3&\ty A_1^4&\ty A_2&\ty A_3&\ty D_4\\
\hline
|\mathcal U_s|&1&2&4&8&16&3&5&14
\end{array}\]
\end{table}
Thus the required bound holds.		

Next, suppose that $\C_{\bG^*}(s)^F/\C_{\bG^*}^\circ(s)^F$ is of order 2. 
Let $k=|{\mathcal U_s}^{\C_{\bG^*}(s)^F}|$, and $l$ be the number of $\C_{\bG^*}(s)^F$-orbits on $\mathcal U_s\setminus {\mathcal U_s}^{\C_{\bG^*}(s)^F}$.
Hence $|\mathcal U_s|=k+2l$.
By Lusztig's Jordan decomposition for disconnected groups and Clifford theory, we have that $|\cE(\bG^F,s)|=2k+l$ and $|\cE(\tbG^F,\tilde s)|=k+2l$.	
The possible types of $\C_{\bG^*}^\circ(s)$ are 1, $\ty A_1$, $\ty A_1^2$, $\ty A_3$, $\ty A_1^4$.
In particular, $|\cE(\tbG^F,\tilde s)|\le 16$.
If $|\mathcal U_s|\le 8$, then $|\cE(\bG^F,s)|\le16$.
Now we assume that $|\mathcal U_s|>8$, in which case $\C_{\bG^*}^\circ(s)$ is of type $\ty A_1^4$, and $F$ acts trivially on the factors.
Thus $|\mathcal U_s|=16$, $k=4$ and $l=6$. Hence $|\cE(\bG^F,s)|=14$.

Finally, suppose that $\C_{\bG^*}(s)^F/\C_{\bG^*}^\circ(s)^F$ is isomorphic to Klein four group. 
In particular, $\C_{\bG^*}(s)/\C_{\bG^*}^\circ(s)$ is isomorphic to Klein four group. 
The possible types of $\C_{\bG^*}^\circ(s)$ are 1, $\ty A_1$, $\ty A_1^4$.
In particular, $|\cE(\tbG^F,\tilde s)|\le 16$.
If $|\mathcal U_s|\le 4$, then $|\cE(\bG^F,s)|\le16$.
So we assume that $|\mathcal U_s|>4$, 	in which case $\C_{\bG^*}^\circ(s)$ is of type $\ty A_1^4$, and $F$ acts trivially on the factors.
Identify the unipotent characters of $\C_{\bG^*}^\circ(s)^F$ with \(\{1,\operatorname{St}\}^4\).
The action of \(\C_{\bG^*}(s)^F/\C_{\bG^*}^\circ(s)^F\) on \(\{1,\operatorname{St}\}^4\) has two invariant characters, three orbits with stabilizer of order 2, and two free orbits. 
Therefore, $|\cE(\bG^F,s)|= 2\cdot4+3\cdot2+2=16$.
Thus we complete the proof.
\end{proof}

Recall that $D$ denotes the subgroup of $\Aut(\bG^F)$ generated by the field and the graph automorphisms.
Now we generalize \cite[Prop.~4.4]{FLZ26}.

\begin{thm}\label{thm:cond}
Assume that $p$ is good for $\bG$.
\begin{enumerate}[\rm(1)]
    \item There exists a blockwise $(\Lin_{p'}(\tbG^F/\bG^F)\rtimes D)$-equivariant bijection between $\cE(\tbG^F,p')$ and $\IBr(\tbG^F)$.
    \item Assume further that $p$ does not divide $|(\Z(\bG)/\Z^\circ(\bG))^F|$. Then there exists a blockwise $(\tbG^F\rtimes D)$-equivariant bijection between $\cE(\bG^F,p')$ and $\IBr(\bG^F)$.
\end{enumerate}

\end{thm}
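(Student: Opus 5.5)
We reduce everything to Corollary~\ref{cor:bas-set-equ}, applied one Lusztig series (or one orbit of series) at a time.

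\textbf{Step 1: reduction to series.} By Theorem~\ref{thm:basic-set}, $\cE(\tbG^F,p')$ is a basic set for $\tbG^F$; note $\Z(\tbG)$ is connected. Under the extra hypothesis of (2), $\cE(\bG^F,p')$ is a basic set for $\bG^F$. By Broué--Michel, $\cE_p(\bG^F,s)$ is a union of $p$-blocks for each semisimple $p'$-element $s$. Hence the decomposition matrix restricted to $\cE(\bG^F,s)$ and $\IBr(\cE_p(\bG^F,s))$ is square and unimodular, and the same holds for $\tbG^F$.

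\textbf{Step 2: stabilizers and orbits.} The group $\Lin_{p'}(\tbG^F/\bG^F)\rtimes D$, respectively $\tbG^F\rtimes D$, permutes these series compatibly. Its action on $\Lin$ corresponds to multiplying $\tilde s$ by central elements $z$. It therefore suffices, for each orbit of series, to take the stabilizer $H$ of one series and build an $H$-equivariant bijection there, then spread it along the orbit. This yields a bijection that is blockwise and equivariant for the whole group.

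In case (2), $\bG^F$ acts trivially on both sides. So $H$ acts through a subgroup of $\tbG^F/\bG^F\cdot\Z(\tbG^F) \rtimes D$, which is the same as the setting $\Lin_{p'}(G/N)\rtimes A/G$ via duality. More carefully, we place ourselves in Corollary~\ref{cor:bas-set-equ} with $N=G=\bG^F$ after taking the quotient by inner automorphisms.

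\textbf{Step 3: checking the hypotheses of Corollary~\ref{cor:bas-set-equ}.} We verify the three alternative hypotheses case by case on the type of $\bG$.

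If $\Out(\bG^F)$, or the relevant quotient group, is hypoelementary, condition (1) applies. This covers all types except the following:
\begin{itemize}
\item $\ty D_4$ split, where the graph automorphism group is $\fS_3$ and it interacts with a Klein four diagonal group;
\item $\ty E_6$ split, where $\textup{Outdiag}$ is of order 3 and the graph automorphism has order 2, possibly together with field automorphisms;
\item $\ty D_n$ with $n$ even, where $\textup{Outdiag}$ is a Klein four group.
\end{itemize}
For the last family, $H$ is a subgroup of $(C_2\times C_2)\rtimes(\text{cyclic}\times C_2)$. One must check that the only non-hypoelementary configurations are already covered by \cite{FLZ26} (types $\ty B$, $\ty C$, $\ty E_7$ and part of $\ty D$). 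If they are not, the group has a normal subgroup $K$ of prime order with a hypoelementary quotient, and we would use condition (3).

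For split $\ty D_4$ we use Lemma~\ref{lem:D4-series}: it gives $|\cE|\le16\le23$, so condition (2) applies.

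For split $\ty E_6$ we use Lemma~\ref{lem:E6-series}. If $\C_{\bG^*}(s)$ is connected, the diagonal action on the series is trivial. Then $H$ acts through $D$, whose stabilizer is cyclic times $C_2$, hence hypoelementary after suitable checking. If $\C_{\bG^*}(s)$ is disconnected, we take $K$ to be the order-3 diagonal part. In that case $H/K$ embeds in $D$, which is hypoelementary, and Lemma~\ref{lem:E6-series} gives either $|\cB|\le 23$ or $|\cB^K|\le5$, so condition (2) or (3) applies.

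Twisted types have cyclic or hypoelementary outer automorphism groups, so condition (1) applies there.

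\textbf{Main obstacle.} The hardest step is the group-theoretic bookkeeping in Step 3. We must show that in every non-hypoelementary case the stabilizer $H$ really has a normal subgroup $K$ of prime order with hypoelementary quotient. We must also check that the fixed set $\cB^K$ matches the quantity bounded in Lemma~\ref{lem:E6-series}, namely the $\tbG^F$-invariant members for part (2) and the $L$-invariant members for part (1).

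Field automorphisms of unbounded order make $D$ large. However, $D$ is cyclic times a graph part, so modulo the order-3 diagonal part the group is hypoelementary. We would verify this first.
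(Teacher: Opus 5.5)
Your framework is the paper's: Geck--Hiss basic sets together with Brou\'e--Michel to obtain a square unimodular invariant matrix, then Corollary~\ref{cor:bas-set-equ}, with Lemma~\ref{lem:D4-series} for split $\ty D_4$ and Lemma~\ref{lem:E6-series} (taking $K$ to be the order-$3$ diagonal part) for $\ty E_6$. But Step~3 has a genuine hole: type $\ty A$. You claim that every type other than split $\ty D_4$, split $\ty E_6$ and $\ty D_n$ ($n$ even) has hypoelementary outer automorphism group, and that all twisted types do. This is false for $\ty A_{n-1}$ and ${}^2\ty A_{n-1}$. Indeed $\Out(\mathrm{PSL}_n(q))\cong C_d\rtimes(C_f\times C_2)$ with $d=(n,q-1)$ and the graph automorphism inverting $C_d$. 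For example, $\Out(\mathrm{PSL}_6(7))$ is dihedral of order $12$, which is not hypoelementary: the quotients by its $\rO_2$ and $\rO_3$ are $\fS_3$ and $C_2\times C_2$. The unitary groups behave analogously. In type $\ty A$ every prime is good, and neither series, blocks nor fixed-point sets $\cB^K$ have bounded size. So none of the three alternatives of Corollary~\ref{cor:bas-set-equ} is available in general. The paper does not use Corollary~\ref{cor:bas-set-equ} for type $\ty A$ at all. Instead it uses unitriangularity of the decomposition matrices with respect to $\cE(\tbG^F,p')$ and $\cE(\bG^F,p')$ (\cite{Di85a,Di85b,Ge91} and \cite[Prop.~6.3]{FS23}), which gives the equivariant blockwise bijection directly. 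Your proposal has no substitute for this input.

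Two further points need repair.

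First, for $\ty D_n$ with $n$ even you leave the case open and fall back on alternative (3). That fallback would fail, because nothing bounds $|\cB^K|$ for large $n$; also, \cite{FLZ26} covers $\ty B$, $\ty C$, $\ty E_7$, not $\ty D$. The case is in fact harmless. The diagonal part of each stabilizer is a $2$-group: in part (2) it lies in $\textup{Outdiag}(\bG^F)$, and in part (1) it lies in $\Lin_{p'}(\tbG^F/\bG^F)$, where it is isomorphic to a subgroup of $\C_{\bG^*}(s)^F/\C^\circ_{\bG^*}(s)^F$. Moreover $D\cong C_f\times C_2$ for $n>4$ is abelian with cyclic odd part. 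Hence the preimage of the Sylow $2$-subgroup of the image in $D$ is a normal $2$-subgroup with cyclic quotient, and alternative (1) applies.

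Second, a bijection built on a whole series $\cE(\bG^F,s)$ is not automatically blockwise, since $\cE_p(\bG^F,s)$ is only a union of blocks. You must, as the paper does, apply Corollary~\ref{cor:bas-set-equ} to each block $B$ with its stabilizer $H_B\le H$. Your hypotheses do descend to this setting:
\begin{enumerate}
\item[(a)] bounds on $|\cB|$ and $|\cB^K|$ pass to $\cB\cap\Irr(B)$;
\item[(b)] in alternative (3), either $K\le H_B$, or $K\cap H_B=1$ and then $H_B$ embeds in the hypoelementary group $H/K$.
\end{enumerate}
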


\begin{proof}
(1) It suffices to show that for every $p$-block $\tB$ of $\tbG^F$, 
there exists a $(\Lin_{p'}(\tbG^F/\bG^F)\rtimes D)_{\tB}$-equivariant bijection between $\cE(\tbG^F,p')\cap\Irr(\tB)$ and $\IBr(\tB)$.
By \cite{BM89}, $\Irr(\tB)\subseteq\cE_p(\tbG^F,\tilde s)$ where $\tilde{s}$  is a semisimple $p'$-element of $\tbG^{*F}$.
In particular, $\cE(\tbG^F,p')\cap\Irr(\tB)\subseteq\cE(\tbG^F,\tilde s)$.

According to Theorem~\ref{thm:basic-set}, $\cE(\tbG^F,p')\cap\Irr(\tB)$ is a basic set for $\tB$.
If $\bG$ is of type $\ty A$, then by \cite{Di85a,Di85b,Ge91} the corresponding decomposition matrix is unitriangular, and we can get the equivariant bijection immediately.
In what follows, we assume that $\bG$ is not of type $\ty A$, and we fix a $p$-block $\tB$ of $\tbG^F$.

First assume that $(\Lin_{p'}(\tbG^F/\bG^F)\rtimes D)_{\tB}$ is hypoelementary.
Then by Corollary~\ref{cor:bas-set-equ}(1), there exists a $(\Lin_{p'}(\tbG^F/\bG^F)\rtimes D)_{\tB}$-equivariant bijection between $\cE(\tbG^F,p')\cap\Irr(\tB)$ and $\IBr(\tB)$.

Now we assume that $(\Lin_{p'}(\tbG^F/\bG^F)\rtimes D)_{\tB}$ is not hypoelementary.
Note that $(\Lin_{p'}(\tbG^F/\bG^F)\rtimes D)_{\tB}$ is isomorphic to a subgroup of $\Out(\bG^F)$.
Hence $\Out(\bG^F)$ is not hypoelementary, whence one of the following holds: $\bG$ is of type $\ty E_6$ and $|\Z(\bG^F)|=3$, or $\bG$ is of type $\ty D_4$ and $F$ is split (see \cite[Thm.~2.5.12]{GLS98}).

Suppose that $\bG$ is of type $\ty D_4$ and $F$ is split.
Then $|\cE(\tbG^F,\tilde s)|\le 16$ by Lemma~\ref{lem:D4-series}.
Then by Corollary~\ref{cor:bas-set-equ}(2), there exists a $(\Lin_{p'}(\tbG^F/\bG^F)\rtimes D)_{\tB}$-equivariant bijection between $\cE(\tbG^F,p')\cap\Irr(\tB)$ and $\IBr(\tB)$.

Finally we suppose that $\bG$ is of type $\ty E_6$ and $|\Z(\bG^F)|=3$.
If $\Lin_{p'}(\tbG^F/\bG^F)_{\tB}$ is trivial, then it can be checked directly that $(\Lin_{p'}(\tbG^F/\bG^F)\rtimes D)_{\tB}$ is hypoelementary.
Thus we have that $\Lin_{p'}(\tbG^F/\bG^F)_{\tB}$ is of order $3$. 
By Lemma~\ref{lem:E6-series}, $|\cE(\tbG^F,p')\cap\Irr(\tB)|\le 23$ or $|(\cE(\tbG^F,p')\cap\Irr(\tB))^{\Lin_{p'}(\tbG^F/\bG^F)_{\tB}}|\le 5$.
Thus Corollary~\ref{cor:bas-set-equ} yields a $(\Lin_{p'}(\tbG^F/\bG^F)\rtimes D)_{\tB}$-equivariant bijection between $\cE(\tbG^F,p')\cap\Irr(\tB)$ and $\IBr(\tB)$.

(2) The proof is somehow similar to (1).
It suffices to show that for every $p$-block $B$ of $\bG^F$, 
there exists a $(\tbG^F\rtimes D)_{B}$-equivariant bijection between $\cE(\bG^F,p')\cap\Irr(B)$ and $\IBr(B)$.
By \cite{BM89}, $\Irr(B)\subseteq\cE_p(\bG^F, s)$ where $s$ is a semisimple $p'$-element of $\bG^{*F}$.
In particular, $\cE(\bG^F,p')\cap\Irr(B)\subseteq\cE(\bG^F,s)$.

According to Theorem~\ref{thm:basic-set}, $\cE(\bG^F,p')\cap\Irr(B)$ is a basic set for $B$.
If $\bG$ is of type $\ty A$, then by \cite{Di85a,Di85b,Ge91} and \cite[Prop.~6.3]{FS23} the corresponding decomposition matrix is unitriangular, and we can get the equivariant bijection immediately.
In what follows, we assume that $\bG$ is not of type $\ty A$, and we fix a $p$-block $B$ of $\bG^F$.

Recall that $(\tbG^F\rtimes D)/\bG^F\Z(\tbG)^F$  is isomorphic to of $\Out(\bG^F)$.
First assume that $(\tbG^F\rtimes D)_B/\bG^F\Z(\tbG)^F$ is hypoelementary. Then by Corollary~\ref{cor:bas-set-equ}(1), there exists a $(\tbG^F\rtimes D)_{B}$-equivariant bijection between $\cE(\bG^F,p')\cap\Irr(B)$ and $\IBr(B)$.
Hence we assume that $(\tbG^F\rtimes D)_B/\bG^F\Z(\tbG)^F$ is not hypoelementary, whence $\Out(\bG^F)$ is not hypoelementary. So one of the following holds: $\bG$ is of type $\ty E_6$ and $|\Z(\bG^F)|=3$, or $\bG$ is of type $\ty D_4$ and $F$ is split.

Suppose that $\bG$ is of type $\ty D_4$ and $F$ is split.
Then $|\cE(\bG^F,s)|\le 16$ by Lemma~\ref{lem:D4-series}.
Then by Corollary~\ref{cor:bas-set-equ}(2),  there exists a $(\tbG^F\rtimes D)_{B}$-equivariant bijection between $\cE(\bG^F,p')\cap\Irr(B)$ and $\IBr(B)$. 
Finally we suppose that $\bG$ is of type $\ty E_6$ and $|\Z(\bG^F)|=3$.
If $\tbG^F_{B}=\bG^F\Z(\tbG)^F$, then it can be checked directly that $(\tbG^F\rtimes D)_{B}/\bG^F\Z(\tbG)^F$ is hypoelementary.
If $\C_{\bG^*}(s)^F/\C_{\bG^*}^\circ(s)^F$ is trivial, then $\tbG^F$ acts trivially on $\cE(\bG^F,s)$, and since $D$ is hypoelementary, the assertion also holds by Corollary~\ref{cor:bas-set-equ}.
Thus we have that $\tbG^F_{B}/\bG^F\Z(\tbG)^F$ is of order $3$, that is, $\tbG^F_B=\tbG^F$, and we can assume that $\C_{\bG^*}(s)^F/\C_{\bG^*}^\circ(s)^F$ is nontrivial. 
By Lemma~\ref{lem:E6-series}, $|\cE(\bG^F,p')\cap\Irr(B)|\le 23$ or $|(\cE(\bG^F,p')\cap\Irr(B))^{\tbG^F}|\le 5$.
Thus Corollary~\ref{cor:bas-set-equ} yields a $(\tbG^F\rtimes D)_{B}$-equivariant bijection between $\cE(\bG^F,p')\cap\Irr(B)$ and $\IBr(B)$. 
\end{proof}

\begin{cor}\label{cor:A-inf-1}
Assume $p$ is good for $\bG$, and does not divide the order of $(\Z(\bG)/\Z^\circ(\bG))^F$.
Then every $\varphi\in \IBr(\bG^F)$ has a $\tbG^F$-conjugate $\varphi_0$ such that $(\tbG^FD)_{\varphi_0}= \tbG^F_{\varphi_0}D_{\varphi_0}$.
\end{cor}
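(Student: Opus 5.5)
The plan is to transport the stabilizer property from ordinary characters to Brauer characters. Two earlier results do the work: Theorem~\ref{thm:cond}(2) and the ordinary $A(\infty)$ condition, Theorem~\ref{ord-Ainfty}. Under our hypotheses, Theorem~\ref{thm:cond}(2) gives a $(\tbG^F\rtimes D)$-equivariant bijection
\[\Psi:\cE(\bG^F,p')\to\IBr(\bG^F).\]
Since $\Psi$ is equivariant, for every $\chi\in\cE(\bG^F,p')$ we have
\[(\tbG^FD)_{\chi}=(\tbG^FD)_{\Psi(\chi)},\]
as subgroups of $\tbG^F\rtimes D$ acting on $\Irr(\bG^F)$ and $\IBr(\bG^F)$. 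The same equality holds after intersecting with $\tbG^F$ and with $D$, so $\tbG^F_\chi=\tbG^F_{\Psi(\chi)}$ and $D_\chi=D_{\Psi(\chi)}$.

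Given $\varphi\in\IBr(\bG^F)$, set $\chi:=\Psi^{-1}(\varphi)$, which lies in $\cE(\bG^F,p')$. Theorem~\ref{ord-Ainfty} gives some $x\in\tbG^F$ such that $\chi_0:={}^x\chi$ satisfies $(\tbG^FD)_{\chi_0}=\tbG^F_{\chi_0}D_{\chi_0}$. The set $\cE(\bG^F,p')$ is a union of rational Lusztig series, and $\tbG^F$ permutes these series, so $\chi_0$ still lies in $\cE(\bG^F,p')$. In fact equivariance of $\Psi$ already presupposes that its domain is $\tbG^F$-stable. Put $\varphi_0:=\Psi(\chi_0)={}^x\Psi(\chi)={}^x\varphi$. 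This is a $\tbG^F$-conjugate of $\varphi$, and by the stabilizer equalities above,
\[(\tbG^FD)_{\varphi_0}=(\tbG^FD)_{\chi_0}=\tbG^F_{\chi_0}D_{\chi_0}=\tbG^F_{\varphi_0}D_{\varphi_0},\]
as required.

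The real content is therefore already in Theorem~\ref{thm:cond}(2). The remaining points are routine. First, the actions of $\tbG^F$ and $D$ on characters are the ones induced through $\tbG^F\rtimes D$, so the stabilizers are being compared inside the same group. Second, we only use the stabilizer part of Theorem~\ref{ord-Ainfty} and not its extendibility part. The main thing to verify carefully is the $\tbG^F$-stability of $\cE(\bG^F,p')$. This follows because conjugation by $\tbG^F$ acts trivially on $\bG^{*F}$-classes of semisimple elements, so it preserves each series $\cE(\bG^F,s)$.
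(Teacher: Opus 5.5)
Your proposal is correct and follows the paper's route: the paper's one-line proof also derives the corollary from the $(\tbG^F\rtimes D)$-equivariant bijection $\cE(\bG^F,p')\to\IBr(\bG^F)$ of Theorem~\ref{thm:cond}(2), which rests on the basic set result Theorem~\ref{thm:basic-set}, combined with the ordinary $A(\infty)$ condition of Theorem~\ref{ord-Ainfty}, transporting stabilizers exactly as you describe. Your explicit check that $\cE(\bG^F,p')$ is $\tbG^F$-stable is a detail the paper leaves implicit.
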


\begin{proof}
This follows from Theorem~\ref{thm:basic-set}, \ref{ord-Ainfty} and \ref{thm:cond}.
\end{proof}


\section{Brauer $A(\infty)$ condition for type $\ty D_n$}\label{sec:4}

In this section, we keep the hypotheses and setup of \S\ref{sec:size-series}, and assume further that $\mathbf{G}^F$ is of type $\ty{D}_n$, where $n\geq 4$.
Recall that we let $p$ be a good non-defining prime for $\mathbf{G}$. 
In particular, $p$ is odd.
Put $\mathcal{B}= \mathcal{E}(\mathbf{G}^F, p')$ and $\mathcal{C}= \IBr(\mathbf{{G}}^F)$. By Theorem \ref{thm:basic-set}, we know $\mathcal{E}(\mathbf{{G}}^F, p')$ is a basic set for $G$, that is, $\Dec_{\mathcal{B}, \mathcal{C}}$ is unimodular. 

We restate the Brauer $A(\infty)$ condition as follows.

\begin{defn}[Brauer $A(\infty)$ condition]\label{defn:BrauerAinf}
Every $\varphi\in \IBr(\mathbf{{G}}^F)$ has a $\mathbf{\widetilde{G}}^F$-conjugate $\varphi_0$ such that $(\mathbf{\widetilde{G}}^FD)_{\varphi_0}= \mathbf{\widetilde{G}}^F_{\varphi_0}D_{\varphi_0}$ and $\varphi_0$ extends to $\mathbf{{G}}^FD_{\varphi_0}$.
\end{defn}

Let $N$ be a group and let $X$ be an $N$-set.
Recall that a subset $T\subset X$ is called an \emph{$N$-transversal} if it contains exactly one element of every $N$-orbit.

\begin{lem}\label{lem6.4}
Let $X$ be a finite $\Z(\mathbf{{G}}^F)\rtimes D$-set. Let $\Delta= \Z(\mathbf{{G}}^F)$. Fix $S\in \Syl_2(D)$. If $X$ has an $S$-stable $\Delta$-transversal, then it has a $D$-stable $\Delta$-transversal.
\end{lem}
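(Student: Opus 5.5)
The plan is to reduce the statement to a fixed-point problem on a single $\Delta$-orbit and then solve it with a cohomological transfer argument. The point is that $\Delta=\Z(\bG^F)$ is a $2$-group: it has order dividing $4$ for type $\ty D_n$, and is trivial when $q$ is even, in which case the lemma is empty.

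\textbf{Reduction to one orbit.} $D$ permutes the set of $\Delta$-orbits of $X$, because $\Delta\trianglelefteq\Delta\rtimes D$. A $D$-stable $\Delta$-transversal exists if and only if, for every $\Delta$-orbit $O$, the stabiliser $K:=D_O$ has a fixed point on $O$. Indeed, given such fixed points, choose one $O$ in each $D$-orbit of $\Delta$-orbits, a $K$-fixed point $x_O\in O$, and take $T=\bigcup_O D\cdot x_O$. This set meets each $\Delta$-orbit $d(O)$ exactly once, since $d x_O=d' x_O$ whenever $d(O)=d'(O)$.

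\textbf{Using the hypothesis.} Fix $O$ and $K=D_O$, and let $P\in\Syl_2(K)$. Choose $d\in D$ with $dPd^{-1}\le S$. The given $S$-stable transversal $T_S$ meets $d(O)$ in a single point $y$, which is fixed by $S_{d(O)}\supseteq dPd^{-1}$. Hence $d^{-1}y\in O$ is fixed by $P$. So a Sylow $2$-subgroup of $K$ has a fixed point on $O$.

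\textbf{Affine structure.} Fix $x\in O$ and set $A=\Delta/\Delta_x$, an abelian $2$-group. For $k\in K$ we have $kx\in O$, so $\Delta_{kx}=\Delta_x$, because $\Delta$ is abelian and acts transitively on $O$. Also $\Delta_{kx}=k(\Delta_x)$, so $K$ normalises $\Delta_x$. Thus $A$ is a $K$-module and $O$ is a $K$-equivariant $A$-torsor, with $\delta\cdot x\mapsto$ its class in $A$. Writing $kx=a_k\cdot x$ gives a $1$-cocycle $k\mapsto a_k$ in $Z^1(K,A)$. Moreover, $K$ fixes a point of $O$ if and only if the class $[a]\in H^1(K,A)$ vanishes, and the same holds with $K$ replaced by any subgroup.

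\textbf{Conclusion.} By the previous step, $\operatorname{res}^K_P[a]=0$. Since $A$ is a $2$-group and $P$ is a Sylow $2$-subgroup of $K$, we have $\operatorname{cor}\circ\operatorname{res}=[K:P]$, which is an odd integer and hence invertible on $H^1(K,A)$. Therefore restriction to $P$ is injective, so $[a]=0$ and $K$ fixes a point of $O$. By the reduction step, this gives the required $D$-stable $\Delta$-transversal.

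The main point needing care is the affine-structure step. One must verify that the $D$-action on $X$ really intertwines with the $\Delta$-action via the semidirect product, that is, $d(\delta x)=d(\delta)\,d(x)$; this is what makes the map $k\mapsto a_k$ a cocycle rather than a twisted object. The rest is standard.
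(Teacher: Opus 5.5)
Your proof is correct. Its first two steps match the paper's. Like the paper, you reduce to finding, for one $\Delta$-orbit $\cO$ in each $D$-orbit, a point fixed by $D_{\cO}$. Like the paper, you conjugate so that a Sylow $2$-subgroup of $D_{\cO}$ lies in $S$, and hence fixes the transversal point $x\in\cO$.

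The routes diverge only at the last step:
\begin{itemize}
\item The paper argues by counting. Since the Sylow $2$-subgroup fixes $x$, the orbit $D_{\cO}x$ has odd size. Since $|\Delta|$ divides $4$, $|\cO|\in\{1,2,4\}$, so $|D_{\cO}x|\in\{1,3\}$. In the first case $x$ itself is fixed; in the second, the unique point of $\cO\setminus D_{\cO}x$ is fixed.
\item You encode the action as a class in $H^1(D_{\cO},\Delta/\Delta_x)$. You then use that $\operatorname{cor}\circ\operatorname{res}$ is multiplication by an odd index on a $2$-group, so restriction to a Sylow $2$-subgroup is injective.
\end{itemize}

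What each buys. Your argument never uses $|\Delta|\mid 4$. It works verbatim for any finite abelian $2$-group $\Delta$, and indeed for abelian $\ell$-groups with $S$ a Sylow $\ell$-subgroup, closer in spirit to the general result of \cite{Bu25} that the paper cites. The paper's argument is entirely elementary. It also records exactly which point is $D_{\cO}$-fixed: the transversal point itself, or, only when $|\cO|=4$, the complement of an orbit of size $3$. This bookkeeping is what the proof of Proposition~\ref{thm6.10} uses in its Case~2, where $|\tbG^F\cdot\chi_{\cO}|=4\neq 2$ is needed to apply Lemma~\ref{lem6.7}. That is why the paper gives this explicit proof rather than just citing \cite{Bu25}. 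Your existence argument alone would not supply that information.

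Finally, the compatibility $d(\delta x)=d(\delta)\,d(x)$ that you flag as needing care is automatic, because $X$ is given as a $\Delta\rtimes D$-set.
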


\begin{proof}
This follows from \cite[Thm.~1.2]{Bu25}. 
We provide a new proof here, since this proof will be needed in Proposition~\ref{thm6.10}.

Let $\mathcal{R}$ be an $S$-stable $\Delta$-transversal. Let $X/ \Delta$ be the set of $\Delta$-orbits in $X$. Then $D$ acts on $X/ \Delta$ naturally. Fix a $D$-orbit. Choose an arbitrary $\mathcal{O}\in X/ \Delta$ in this orbit and let $L= D_\mathcal{O}$ be the stabilizer of $\mathcal{O}$ in $D$. Let $T\in \Syl_2(L)$. We may assume $T\leq S$ by replacing $\mathcal{O}$ by another $\Delta$-orbit.

Suppose $\mathcal{R}\cap \mathcal{O}= \{x\}$. Since for any $t\in T$, $tx\in \mathcal{R}\cap \mathcal{O}$, we have $T\leq L_x$. Thus $|Lx|= [L: L_x]$ is odd.

We have chosen exactly one $\Delta$-orbit $\mathcal{O}$ from each $D$-orbit on $X/ \Delta$. We denote this family of orbits by $\mathcal{F}$. Now we want to prove: for any $\mathcal{O}\in\mathcal{F}$, there exists $x_\mathcal{O}\in \mathcal{O}$ fixed by $D_\mathcal{O}$.

Since $|\Z(\mathbf{{G}}^F)|\mid 4$, $|\mathcal{O}|\in \{1, 2, 4\}$, thus $|Lx|\in \{1, 3\}$. If $|Lx|= 1$, then $x$ is fixed by $L= D_\mathcal{O}$; if $|Lx|\neq 1$, then the unique element in $\mathcal{O}\backslash Lx$ is fixed by $D_\mathcal{O}$. So we get the desired $x_\mathcal{O}$. In particular, $D_{x_\mathcal{O}}= D_\mathcal{O}$.

Claim: $\bigcup_{\mathcal{O}\in\mathcal{F}} Dx_\mathcal{O}$ is a $D$-stable $\Delta$-transversal. This set is clearly $D$-stable. For any $\Delta$-orbit $\mathcal{O}'$, there exists a unique $\mathcal{O}\in\mathcal{F}$ and an element $d\in D$ such that $d\mathcal{O}= \mathcal{O}'$. Then $dx_\mathcal{O}\in \mathcal{O}'$. Suppose there exists another $d_1\in D$ and $\mathcal{O}_1\in\mathcal{F}$ such that $d_1x_{\mathcal{O}_1}\in \mathcal{O}'$, then $\mathcal{O}_1$ and $\mathcal{O}$ are in the same $D$-orbit. So $\mathcal{O}= \mathcal{O}_1$ and $d^{-1}d_1\in D_\mathcal{O}$. Thus $dx_\mathcal{O}= d_1x_{\mathcal{O}_1}$. We complete the proof of our claim.
\end{proof}

\begin{lem}\label{lem6.6}
Let $P$ be a finite $2$-group, let $\Omega, \Lambda$ be finite $P$-sets, and let $M= (m_{\omega, \lambda})_{\omega\in \Omega, \lambda\in \Lambda}$ be a $P$-invariant matrix over $\ZZ$ with odd determinant. Then there is a $P$-equivariant bijection $F: \Lambda\rightarrow \Omega$ such that $m_{F(\lambda), \lambda}$ is odd for every $\lambda\in \Lambda$.
\end{lem}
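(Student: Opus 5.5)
Plan: reduce modulo 2 and pass to orbits. The bijection $F$ will be built orbit by orbit, so the real task is a Hall-type matching statement on $P$-orbits that is compatible with stabilizers. The key observation is that for a $P$-invariant matrix over $\FF_2$, the determinant can be computed, modulo 2, from the submatrix indexed by fixed points of any subgroup.

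First I establish the standard fact for a $2$-group $P$. Decompose $\Omega$ and $\Lambda$ into $P$-orbits. The permutation matrices of elements of $P$ conjugate $M$ to itself, so $M$ commutes with the action. Reduce mod 2 and count terms in the Leibniz expansion $\det M=\sum_{\sigma}\pm\prod_\lambda m_{\sigma(\lambda),\lambda}$, where $\sigma$ runs over bijections $\Lambda\to\Omega$. The group $P$ acts on these bijections by conjugation, $\sigma\mapsto g\sigma g^{-1}$, and this action preserves the product $\prod_\lambda m_{\sigma(\lambda),\lambda}$. Non-trivial $P$-orbits of bijections have even size, so modulo 2 they contribute nothing (signs are irrelevant mod 2). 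Hence
\[\det M\equiv \sum_{\sigma\ P\text{-equivariant}}\ \prod_{\lambda} m_{\sigma(\lambda),\lambda}\pmod 2.\]
Since $\det M$ is odd, some $P$-equivariant bijection $\sigma$ has every factor odd. Setting $F=\sigma$ then gives exactly the statement of the lemma.

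There are a few points to check. The bijections must exist, that is, $|\Omega|=|\Lambda|$; this holds because $M$ is square with a determinant. I also need that conjugation really preserves the product, which follows from the invariance $m_{g\omega,g\lambda}=m_{\omega,\lambda}$: reindexing $\lambda\mapsto g\lambda$ gives $\prod_\lambda m_{g\sigma g^{-1}(\lambda),\lambda}=\prod_\lambda m_{g\sigma(\lambda),g\lambda}=\prod_\lambda m_{\sigma(\lambda),\lambda}$. Finally, the fixed points of $P$ acting by conjugation on bijections are exactly the $P$-equivariant bijections.

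The main obstacle is only bookkeeping in these three checks; the argument is otherwise a direct orbit-counting trick. One might fear that the signs $\operatorname{sgn}(\sigma)$ are not constant on orbits, but modulo 2 they are irrelevant. I expect no need for Theorem~\ref{thm:Colon} here, although it is consistent with the conclusion: the resulting $F$ witnesses that $\Omega\cong\Lambda$ as $P$-sets.
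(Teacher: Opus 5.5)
Your argument is correct and is essentially the paper's. The paper first reduces the Leibniz expansion mod $2$ to get $\det M\equiv|X|\pmod 2$, where $X$ is the set of bijections with every entry $m_{F(\lambda),\lambda}$ odd, and then takes a fixed point of the same conjugation action of $P$ on $X$; this is your orbit-counting argument with the two steps done in the other order. The ``key observation'' in your opening paragraph, that the determinant mod $2$ can be computed from the submatrix on the fixed points of a subgroup, is false in general. For example, take the all-ones $3\times 3$ matrix with $C_2$ fixing one index and swapping the other two on both sides: it has determinant $0$, while its fixed-point submatrix is $(1)$. Since you never use that claim, simply drop it.
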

\begin{proof}
Let $X$ be the set of all bijections $F: \Lambda\rightarrow \Omega$ such that $m_{F(\lambda), \lambda}$ is odd for every $\lambda\in \Lambda$. Note that $\det(M)= \sum_{\text{$F$ bijection}}sgn(F)\prod_{\lambda\in \Lambda} m_{F(\lambda), \lambda}$. So 
\begin{equation*}
\det(M)\equiv \sum_{F\in X}\prod_{\lambda\in \Lambda} 1= |X|~(\text{mod~ 2}).
\end{equation*}
$P$ acts on $X$ by $(gF)(\lambda)= gF(g^{-1}\lambda)$ for all $g\in P, F\in X$ and $\lambda\in \Lambda$. Since $|X|$ is odd, there exists a $P$-fixed point $F$, which is our desired equivariant bijection.
\end{proof}

The following lemma comes from \cite{Sp25}.
\begin{lem}\label{lem6.7}
Assume $q$ is odd. Let $F_r$ be the standard $r$-Frobenius endomorphism defined in \cite[\S2.2]{Sp25} and we may assume $F= F_r^f$. Fix a graph involution $\gamma$ commuting with $F_r$. Set $\Phi:= F_r|_{\mathbf{{G}}^F}$ and define $E_0:= \langle \Phi, \gamma\rangle\leq D$. Let $R\leq E_0$ be a noncyclic $2$-subgroup. If $\chi\in \Irr(\mathbf{{G}}^F)^R$ and $|\mathbf{\widetilde{G}}^F\chi|\neq 2$, then $\chi$ extends to $\mathbf{{G}}^F\rtimes R$.
\end{lem}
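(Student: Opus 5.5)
The statement is attributed to \cite{Sp25}, so the plan is to extract it from Sp\"ath's extendibility results and check that the hypotheses match.

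\textbf{Step 1: structure of $R$.} Since $F=F_r^f$, the automorphism $\Phi$ has finite order $f$ on $\bG^F$, and $\gamma$ commutes with $\Phi$. Hence $E_0=\langle\Phi\rangle\times\langle\gamma\rangle$ is abelian, and $E_0/\langle\gamma\rangle$ is cyclic. A noncyclic $2$-subgroup $R\le E_0$ is therefore abelian of rank two, of the form $\langle \Phi^{a}\gamma^{\epsilon},\gamma\rangle$ or $\langle\Phi^a,\gamma\rangle$ with $\Phi^a$ a $2$-element. In particular $R$ contains the graph involution $\gamma$, and $R=\langle\sigma\rangle\times\langle\gamma\rangle$ with $\sigma$ a field-type $2$-element.

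\textbf{Step 2: reduce extendibility to the two cyclic generators.} Let $\chi$ be $R$-invariant. Since $\langle\sigma\rangle$ is cyclic, $\chi$ extends to $\bG^F\langle\sigma\rangle$. The character $\chi$ also extends to $\bG^F\langle\gamma\rangle$. The obstruction to extending $\chi$ to $\bG^F R$ is a class in $H^2(R,\mathbb{C}^\times)\cong \ZZ/2$ for a rank-two abelian $2$-group. This class is the commutator of lifts of $\sigma$ and $\gamma$, which is a sign. So we must show that some extension $\hat\chi$ of $\chi$ to $\bG^F\langle\gamma\rangle$ is $\sigma$-invariant. The group $\sigma$ permutes the two extensions of $\chi$ to $\bG^F\langle\gamma\rangle$. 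If $\sigma$ fixes them, Gallagher-type gluing of the two cyclic extensions gives an extension to $\bG^F R$.

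\textbf{Step 3: use the Jordan decomposition via Sp\"ath's parametrization.} Here we invoke \cite{Sp25}, where $\Irr(\bG^F)$ is parametrized through $\tbG^F$-orbits and Harish-Chandra or generalized Gelfand--Graev theory. For $q$ odd, \cite{Sp25} constructs $E_0$-stable $\tbG^F$-transversals consisting of characters that extend to their stabilizers in $\bG^FE_0$. The hypothesis $|\tbG^F\chi|\ne2$ excludes exactly the case where $\chi$ is one of a pair of characters interchanged by a diagonal automorphism, where the two constituents can be swapped by $\gamma$ relative to $\sigma$.

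When $|\tbG^F\chi|=1$, $\chi$ extends to $\tbG^F$-invariant data. Moreover, $\tbG^F\rtimes E_0$ with the regular embedding gives an extension of some $\tilde\chi\in\Irr(\tbG^F)$ that is $R$-invariant; this uses the ordinary $A(\infty)$ condition (Theorem~\ref{ord-Ainfty}) together with the fact that $\tbG^F$ has connected center, so that extendibility holds there by \cite{Sp25}. Restricting then gives an extension of $\chi$. When $|\tbG^F\chi|=4$, $\chi$ lies in a Lusztig series $\cE(\bG^F,s)$ with $A(s)^F$ of order $4$. The stabilizer structure from Theorem~\ref{ord-Ainfty} forces $R$-invariance of a canonical choice, for instance the one labeled by Gelfand--Graev multiplicity, and this choice extends.

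\textbf{Main obstacle.} The hardest step is the cocycle computation in Step 2 for the case $|\tbG^F\chi|=4$: one must show that $\sigma$ fixes an extension to $\bG^F\langle\gamma\rangle$. I would first try to realize the extensions via a $\gamma$-stable Gelfand--Graev character or a stable Harish-Chandra series with an $E_0$-equivariant Howlett--Lehrer parametrization, as in \cite{Sp25}. Failing that, I would cite the relevant proposition of \cite{Sp25} directly, since the statement appears verbatim there.
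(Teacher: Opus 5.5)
Your Steps~1 and~2 are correct, but they only reformulate the problem. Showing that some extension of $\chi$ to $\bG^F\langle\gamma\rangle$ is $\sigma$-invariant is exactly the content of the lemma, so the gap is in Step~3.

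The case $|\tbG^F\chi|=1$ is in fact immediate from Theorem~\ref{ord-Ainfty}: the conjugate $\chi_0$ there must be $\chi$ itself, and $R\le D_\chi$. For $|\tbG^F\chi|=4$, however, neither Theorem~\ref{ord-Ainfty} nor the $E_0$-stable transversal of \cite[Thm.~A]{Sp25} gives anything, because both provide extendibility only for one representative $\chi_0$ of each $\tbG^F$-orbit. Write $\chi=\chi_0^t$ with $t\in\tbG^F$. You do get $R\le D_{\chi_0}$, and hence an extension of $\chi$ to $\bG^F\cdot t^{-1}Rt$. But modulo $\bG^F$, $t^{-1}Rt$ differs from $R$ by a $1$-cocycle with values in $\bG^F\Z(\tbG^F)/\bG^F$. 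The group $E_0$ need not act trivially on this quotient, and the cocycle need not be a coboundary. So extendibility does not transfer from $\chi_0$ to $\chi$.

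The lemma is needed precisely for such non-representatives. In Proposition~\ref{thm6.10}, Case~2, the character $\chi_{\mathcal O}=F_{odd}(\varphi_{\mathcal O})$ is by construction not in the transversal $T$. Your ``canonical choice labeled by Gelfand--Graev multiplicity'' does not address this. At best it would handle one member of the orbit, whereas the statement concerns every $R$-invariant $\chi$ with orbit length $\neq 2$.

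The missing ingredient is Sp\"ath's analysis of non-cyclic $2$-subgroups of $E_0$ in \cite[\S7]{Sp25}. By \cite[Prop.~7.7, Cor.~7.8]{Sp25}, if an $R$-invariant $\chi$ does not extend to $\bG^F\rtimes R$, then $|\tbG^F\chi|=2$; the lemma follows by contraposition. Your fallback of citing \cite{Sp25} points in the right direction, but the statement is not there verbatim and the citation is not free. You must check two hypotheses of \cite{Sp25}:
\begin{itemize}
\item Hypothesis~2.2, which follows from the first paragraph of \S6 there together with \cite[Thm.~A]{Sp25};
\item Hypothesis~6.5, for which \cite[Lemma~6.3]{Sp25} supplies a suitable $F_0\in\underline{E}^+(\bG)=\{F_r^a\gamma^b\mid a\ge1,\ b\in\{0,1\}\}$ inside $\underline{E}(\bG)=\langle F_r,\gamma\rangle$.
\end{itemize}
Verifying these hypotheses and then arguing by contradiction is the entire proof in the paper.
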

\begin{proof}
In order to use the statements in \cite[\S7]{Sp25}, we need to check its hypotheses 2.2 and 6.5.

By the first paragraph of \cite[\S6]{Sp25} and \cite[Thm.~A]{Sp25}, we know hypothesis 2.2 in \cite{Sp25} is true.

As in \cite[Notation 4.1]{Sp25}, we define 
\begin{equation*}
\underline{E}(\mathbf{G}) := \langle F_r, \gamma\rangle
\end{equation*}
and
\begin{equation*}
\underline{E}^+(\mathbf{G}):= \{F_r^a\gamma^b\mid a\geq 1, b\in \{0, 1\}\}.
\end{equation*}
Let $\underline{E}(\mathbf{{G}}^F)$ be the corresponding subgroup of $\underline{E}(\mathbf{G})$ in $\Aut(\mathbf{{G}}^F)$. By \cite[Lemma 6.3]{Sp25}, there exists some $F_0\in \underline{E}^+(\mathbf{G})$ satisfying the hypothesis 6.5 in \cite{Sp25}. 

Suppose that $\chi$ does not extend to $\mathbf{{G}}^F\rtimes R$. Then by \cite[Prop.~7.7, Cor.~7.8]{Sp25}, we get $|\mathbf{\widetilde{G}}^F\chi|= 2$, which is a contradiction. 
\end{proof}

\begin{prop}\label{thm6.10}
There exists a $D$-stable $\mathbf{\widetilde{G}}^F$-transversal $T_\mathcal{C}$ in $\mathcal{C}$ such that, every $\varphi_0\in T_\mathcal{C}$ can be extended to $\mathbf{{G}}^F\rtimes D_{\varphi_0}$. Consequently, groups of Lie type $\ty{D}_n$ satisfy the Brauer $A(\infty)$ condition.
\end{prop}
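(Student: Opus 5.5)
Our strategy is to transfer the ordinary $A(\infty)$ condition (Theorem~\ref{ord-Ainfty}, together with the extendibility result Lemma~\ref{lem6.7}) from $\mathcal{B}=\cE(\bG^F,p')$ to $\mathcal{C}=\IBr(\bG^F)$. The transfer runs through the unimodular, $\tbG^FD$-invariant matrix $\Dec_{\mathcal{B},\mathcal{C}}$. Since $\tbG^F$ acts on both sets through $\tbG^F/\bG^F\Z(\tbG)^F\cong \Z(\bG^F)=:\Delta$ (of order dividing $4$, hence a $2$-group), we regard $\mathcal{B}$ and $\mathcal{C}$ as $\Delta\rtimes D$-sets. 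Fix $S\in\Syl_2(D)$ and set $P:=\Delta\rtimes S$, a $2$-group.

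\textbf{Transversal.} Lemma~\ref{lem6.6} applies to $P$ because $\det\Dec_{\mathcal{B},\mathcal{C}}=\pm1$ is odd. It gives a $P$-equivariant bijection $F:\mathcal{C}\to\mathcal{B}$ with $d_{F(\varphi),\varphi}$ odd for all $\varphi$.

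By Theorem~\ref{ord-Ainfty}, $\mathcal{B}$ has a $D$-stable $\Delta$-transversal $T_\mathcal{B}$. This $T_\mathcal{B}$ is in particular $S$-stable, so $F^{-1}(T_\mathcal{B})$ is an $S$-stable $\Delta$-transversal of $\mathcal{C}$. Lemma~\ref{lem6.4} then upgrades it to a $D$-stable $\Delta$-transversal $T_\mathcal{C}$. From the constructive proof of Lemma~\ref{lem6.4}, we record that every $\varphi_0\in T_\mathcal{C}$ satisfies $D_{\varphi_0}=D_{\mathcal{O}}$, where $\mathcal{O}$ is its $\Delta$-orbit. Hence $(\tbG^FD)_{\varphi_0}=\tbG^F_{\varphi_0}D_{\varphi_0}$.

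\textbf{Extendibility: reduction to Sylow subgroups.} Since $\bG^F\lhd \bG^FD_{\varphi_0}$, a Brauer character extends as soon as it extends to $\bG^FQ$ for every Sylow $\ell$-subgroup $Q$ of $D_{\varphi_0}$ (the standard Brauer-character analogue of the Gallagher criterion). Moreover, whenever $D_{\varphi_0}/\bG^F$ (the relevant part) is cyclic, an invariant character extends automatically.

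\emph{Odd part.} The odd-order part of $D$ sits in the cyclic field-automorphism group modulo the graph part; for $\ty D_n$ with $n\ne4$ the group $D$ is abelian-by-small. So for odd $\ell$, $Q$ is cyclic and extension is automatic. The triality case $n=4$ with $\ell=3$ needs separate care.

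\emph{$2$-part.} If the Sylow $2$-subgroup $R$ is cyclic, we are done. If $R$ is noncyclic, we conjugate so that $R\le E_0$ as in Lemma~\ref{lem6.7}. We choose the transversal compatibly with $F$, so that $\chi_0:=F(\varphi_0)\in T_\mathcal{B}$ is $R$-invariant with $d_{\chi_0,\varphi_0}$ odd. When $|\tbG^F\chi_0|\ne2$, Lemma~\ref{lem6.7} gives an extension $\tilde\chi$ of $\chi_0$ to $\bG^F\rtimes R$. Restricting $\tilde\chi^0$ to $p$-regular elements and decomposing, the $R/\!\!$-Clifford theory shows that the constituents above $\varphi_0$ have total multiplicity $d_{\chi_0,\varphi_0}$. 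Since $R$ is a $2$-group and this multiplicity is odd, some constituent restricts irreducibly to $\varphi_0$; the non-extending constituents would contribute even multiplicity, since they are induced from proper subgroups of $p'$-index. This yields an extension of $\varphi_0$.

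\textbf{Main obstacle.} The case $|\tbG^F\chi_0|=2$ is excluded by Lemma~\ref{lem6.7} and must be handled separately. There I would try to exploit the freedom in choosing $\chi_0$ within its $\Delta$-orbit, or to use orbit-size parity to pick $F$ so that $|\tbG^F\varphi_0|=|\tbG^F\chi_0|$. I would then argue that in that situation $R$ acts with a cyclic quotient on the relevant Clifford data, invoking \cite[\S7]{Sp25} directly.

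Making the parity/Clifford argument rigorous for Brauer characters, and handling the $n=4$ triality Sylow $3$-subgroup (where Corollary~\ref{cor:bas-set-equ}(2) together with Lemma~\ref{lem:D4-series} should give a genuinely $D$-equivariant bijection), are the steps I expect to be hardest.
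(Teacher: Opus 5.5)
Your skeleton matches the paper's: Lemma~\ref{lem6.6} for $P=\Delta\rtimes S$, pulling back the ordinary $D$-stable transversal to an $S$-stable $\Delta$-transversal, upgrading it by Lemma~\ref{lem6.4}, reducing to Sylow subgroups, and transferring through odd decomposition numbers. But the $2$-part rests on a false step and leaves a case open.

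\textbf{The $2$-part.} You cannot arrange $F(\varphi_0)\in T_\mathcal{B}$ for every $\varphi_0\in T_\mathcal{C}$. The set $\mathcal{R}=F^{-1}(T_\mathcal{B})$ is only $S$-stable, and the construction of Lemma~\ref{lem6.4} discards the point $\varphi\in\mathcal{R}\cap\mathcal{O}$ exactly when $|D_\mathcal{O}\varphi|=3$. Were your claim true, Lemma~\ref{lem6.7} would be superfluous, since members of $T_\mathcal{B}$ already extend to $\bG^FD_\chi$ by Theorem~\ref{ord-Ainfty}. The missing idea is the case split that removes your ``main obstacle''. Choose $\mathcal{O}$ in its $D$-orbit with $S_\mathcal{O}\in\Syl_2(D_\mathcal{O})$, and let $\varphi\in\mathcal{R}\cap\mathcal{O}$; then $|D_\mathcal{O}\varphi|\in\{1,3\}$.
\begin{itemize}
\item If $|D_\mathcal{O}\varphi|=1$, keep $\varphi_\mathcal{O}=\varphi$. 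Then $\chi=F(\varphi)\in T_\mathcal{B}$ extends to $\bG^FD_\chi\supseteq\bG^FS_\varphi$ by the ordinary $A(\infty)$ condition, whatever $|\tbG^F\chi|$ is.
\item If $|D_\mathcal{O}\varphi|=3$, then $|\mathcal{O}|=4$ and $\varphi_\mathcal{O}$ is the fourth point of $\mathcal{O}$. By $\Delta$-equivariance of $F$, $|\tbG^F F(\varphi_\mathcal{O})|=|\Delta\varphi_\mathcal{O}|=4\ne 2$, so Lemma~\ref{lem6.7} applies. For this, take $S=\langle\gamma\rangle\times S_2\le E_0$ from the outset, so that $S_\mathcal{O}\le E_0$.
\end{itemize}
So the case $|\tbG^F\chi_0|=2$ never has to pass through Lemma~\ref{lem6.7}.

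Your parity transfer is right in substance, but not for the reason you give (``induced from proper subgroups''). Since $\varphi_0$ is $R$-invariant, every Brauer constituent $\eta$ of $\tilde\chi^0$ over $\varphi_0$ satisfies $\eta_{\bG^F}=e_\eta\varphi_0$. Here $e_\eta$ is the degree of an irreducible projective representation of the $2$-group $R$ (with $p$ odd), hence a power of $2$. Then $d_{\chi_0,\varphi_0}=\sum_\eta d_{\tilde\chi,\eta}e_\eta$ being odd forces $e_\eta=1$ for some $\eta$. The paper quotes this as \cite[Cor.~5]{Fe26}.

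\textbf{The odd part.} This is also left open exactly where it matters: for $\ty D_4$ with $3\mid f$, the Sylow $3$-subgroups of $D$ are noncyclic. Your suggested remedy cannot work, because an equivariant bijection from Corollary~\ref{cor:bas-set-equ}(2) and Lemma~\ref{lem:D4-series} says nothing about extendibility. What is needed is an ordinary input, \cite[Lemma~6.2]{Sp25}: for odd $a$, every $\chi\in\Irr(\bG^F)$ extends to its stabilizer in $\bG^FE_a$, where $E_a\in\Syl_a(D)$. The paper transfers this to Brauer characters via \cite[Thm.~2]{Fe26}. Your own method would also suffice here. The proof of Lemma~\ref{lem6.6} adapts modulo $a$ to the $a$-group $E_a$, since elements of odd order act by even permutations, so signs are constant on orbits. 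The Clifford argument above, with $e_\eta$ a power of $a$, then gives that every $\varphi$ extends to $\bG^F(E_a)_\varphi$.

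\textbf{The case $q$ even.} Lemma~\ref{lem6.7} assumes $q$ odd, so you should treat $q$ even explicitly. There $\Delta=1$, so no point is ever replaced, and Theorem~\ref{ord-Ainfty} together with the transfer suffices.
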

\begin{proof}
First, we assume $q$ is even. In this case, we may assume $\mathbf{\widetilde{G}}^F= \mathbf{{G}}^F$. By \cite[Thm.~3.21]{Sp25}, every $\chi\in \Irr(\mathbf{{G}}^F)$ extends to $\mathbf{{G}}^FD_\chi$. By \cite[Thm.~2]{Fe26}, we know every $\varphi\in \IBr(\mathbf{{G}}^F)$ extends to $\mathbf{{G}}^FD_\varphi$.

Now we assume $q$ is odd. For any $\varphi\in \IBr(\mathbf{{G}}^F)$, by \cite[Thm.~(8.29)]{Na98} we know that 
in order to prove $\varphi~\text{extends to}~\mathbf{{G}}^F\rtimes D_\varphi$, it suffices to prove 
for any prime $a\neq p$ there exists $H_a\in \Syl_a(D_\varphi)$ such that $\varphi$ extends to $\mathbf{{G}}^F\rtimes H_a$.

If $a$ is odd, choose a subgroup $H_a\in\Syl_a(D_\varphi)$. By \cite[Lemma~6.2]{Sp25}, every $\chi\in \Irr(\mathbf{{G}}^F)$ extends to its stabilizer in $\mathbf{{G}}^FE_a$, where $E_a\in \Syl_a(D)$ containing $H_a$. Again, by \cite[Thm.~2]{Fe26}, $\varphi$ extends to $\mathbf{{G}}^F\rtimes (E_a)_\varphi= \mathbf{{G}}^F\rtimes H_a$.

Now, we only need to construct a $D$-stable $\mathbf{\widetilde{G}}^F$-transversal $T_\mathcal{C}$ such that, for any $\varphi\in T_\mathcal{C}$, there exists a subgroup $H_2\in \Syl_2(D_\varphi)$ such that $\varphi$ extends to $\mathbf{{G}}^F\rtimes H_2$.

Let $\gamma$ be a graph automorphism of order $2$ and let $S_2\leq D$ be such that $S_2\in \Syl_2(C_f)$. Let $S= \langle\gamma\rangle\times S_2$. Then $S$ is a Sylow $2$-subgroup of $D$. Let $P= \Z(\mathbf{{G}}^F)\rtimes S$. By Lemma \ref{lem6.6}, there exists a $P$-equivariant bijection $F_{odd}: \mathcal{C}\rightarrow \mathcal{B}$ such that $(\Dec_{\mathcal{B}, \mathcal{C}})_{F_{odd}(\varphi), \varphi}$ is odd. By \cite[Thm.~A]{Sp25}, there exists a $D$-stable $\mathbf{\widetilde{G}}^F$-transversal $T$ in $\Irr(\mathbf{{G}}^F)$ such that any $\chi\in T$ can be extended to $\mathbf{{G}}^FD_\chi$. Let $T_\mathcal{B}= T\cap \mathcal{B}$, then $\mathcal{R}:= F_{odd}^{-1}(T_\mathcal{B})$ is an $S$-stable $\Z(\mathbf{{G}}^F)$-transversal. 

 For any $\varphi\in\mathcal{R}$, let $\chi= F_{odd}(\varphi)$. Since $S_\varphi= S_\chi$, $\chi$ extends to $\mathbf{{G}}^F\rtimes S_\varphi$. By \cite[Cor.~5]{Fe26}, $\varphi$ extends to $\mathbf{{G}}^F\rtimes S_\varphi$.
 
 By the proof of Lemma \ref{lem6.4}, we know that in any $D$-orbit in $\mathcal{C}/ \Z(\mathbf{{G}}^F)$, there exists a $\Z(\mathbf{{G}}^F)$-orbit $\mathcal{O}$ such that $S_\mathcal{O}\in \Syl_2(D_\mathcal{O})$. Suppose $\mathcal{R}\cap \mathcal{O}= \{\varphi\}$, then $S_\mathcal{O}\leq S_\varphi$. So $|D_\mathcal{O}\cdot \varphi|= [D_\mathcal{O}: (D_\mathcal{O})_\varphi]$ is odd. Thus $|D_\mathcal{O}\cdot \varphi|\in\{1, 3\}$.
 
Case 1: $|D_\mathcal{O}\cdot \varphi|= 1$. Then just let $\varphi_\mathcal{O}= \varphi$ and thus $\varphi_\mathcal{O}$ extends to $\mathbf{{G}}^F\rtimes S_\mathcal{O}$. 
 
Case 2: $|D_\mathcal{O}\cdot \varphi|= 3$. Then write $\{\varphi_\mathcal{O}\}= \mathcal{O}\backslash (D_\mathcal{O}\cdot\varphi)$. Let $\chi_\mathcal{O}= F_{odd}(\varphi_\mathcal{O})$, then $|\mathbf{\widetilde{G}}^F\cdot\chi_\mathcal{O}|= |\Z(\mathbf{{G}}^F)\cdot \varphi_\mathcal{O}|= 4$ and $d_{\chi_\mathcal{O}, \varphi_\mathcal{O}}$ is odd. If $S_\mathcal{O}$ is cyclic, then $\varphi_\mathcal{O}$ extends to $\mathbf{{G}}^F\rtimes S_\mathcal{O}$. Now we assume $S_\mathcal{O}$ is non-cyclic. By Lemma \ref{lem6.7}, $\chi_\mathcal{O}$ extends to $\mathbf{{G}}^F\rtimes S_\mathcal{O}$. By \cite[Cor.~5]{Fe26}, $\varphi_\mathcal{O}$ extends to $\mathbf{{G}}^F\rtimes S_\mathcal{O}$.
 
 In the proof of Lemma \ref{lem6.4} we know that $T_\mathcal{C}:= \bigcup_\mathcal{O} D\cdot \varphi_\mathcal{O}$ is a $D$-stable $\Z(\mathbf{{G}}^F)$-transversal. So $D_{\varphi_\mathcal{O}}= D_\mathcal{O}$, which means $S_\mathcal{O}\in\Syl_2(D_{\varphi_\mathcal{O}})$.

For any $\varphi\in\IBr(\mathbf{G}^F)$, there exsits $y\in\widetilde{\mathbf{G}}^F$ such that $y\varphi=: \varphi'\in T_\mathcal{C}$. If $x_1x_2\in (\widetilde{\mathbf{G}}^F\rtimes D)_{\varphi'}$, then $x_2\varphi'$ and $\varphi'$ lie in the same $\widetilde{\mathbf{G}}^F$-orbit. Since $T_\mathcal{C}$ is $D$-stable, $x_2\varphi'\in T_\mathcal{C}$. Since $T_\mathcal{C}$ is a $\widetilde{\mathbf{G}}^F$-transversal, we get $x_2\varphi'= \varphi'$. Thus $x_2\in (\widetilde{\mathbf{G}}^F\rtimes D)_{\varphi'}$. So $x_1x_2\in \widetilde{\mathbf{G}}^F_{\varphi'}D_{\varphi'}$.
 
For any $\varphi\in T_\mathcal{C}$, say, $\varphi= d\cdot\varphi_\mathcal{O}$ for some $d\in D$, we have $dS_{\mathcal{O}}d^{-1}\in\Syl_2(D_{d\cdot\varphi_\mathcal{O}})$. Suppose $\widetilde{\varphi_\mathcal{O}}\in \IBr(\mathbf{{G}}^F\rtimes S_\mathcal{O})$ is an extension of $\varphi_\mathcal{O}$, then $d\cdot\widetilde{\varphi_\mathcal{O}}\in \IBr(\mathbf{{G}}^F\rtimes dS_{\mathcal{O}}d^{-1})$ is an extension of $\varphi$.
\end{proof}

\section{Proofs of Main Theorems}\label{sec:5}

We are now ready to prove Theorem~\ref{mainthm-1}, which we restate as follows.

\begin{thm}
Assume $p$ is good and non-defining for $\bG$.
Then every $\varphi\in \IBr(\bG^F)$ has a $\tbG^F$-conjugate $\varphi_0$ such that $(\tbG^FD)_{\varphi_0}= \tbG^F_{\varphi_0}D_{\varphi_0}$ and $\varphi_0$ extends to $\bG^FD_{\varphi_0}$.    
\end{thm}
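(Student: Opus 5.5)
We split by type and handle the stabilizer condition and the extendibility condition separately. Type $\ty D_n$ is already settled: there Proposition~\ref{thm6.10} provides a $D$-stable $\tbG^F$-transversal $T_\cC$ of $\IBr(\bG^F)$ all of whose members extend to $\bG^FD_{\varphi_0}$, and the stabilizer factorization follows as shown at the end of its proof. Type $\ty A$ is covered by \cite{FLZ21}, using unitriangularity. So we may assume $\bG$ is not of type $\ty A$ or $\ty D$. For $p$ good, $p$ is then odd, and $p\ge 5$ in the exceptional types.

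\emph{Stabilizer condition.} Outside type $\ty A$, the order of $(\Z(\bG)/\Z^\circ(\bG))^F$ divides $|\Z(\bG^F)|$, which is a $\{2,3\}$-number. It is divisible by $3$ only for $\ty E_6$, where $3$ is bad. Hence $p\nmid|(\Z(\bG)/\Z^\circ(\bG))^F|$, and Corollary~\ref{cor:A-inf-1} gives a $\tbG^F$-conjugate $\varphi_0$ with $(\tbG^FD)_{\varphi_0}=\tbG^F_{\varphi_0}D_{\varphi_0}$. More precisely, we transport the transversal of Theorem~\ref{ord-Ainfty} through the blockwise $(\tbG^F\rtimes D)$-equivariant bijection $\cE(\bG^F,p')\to\IBr(\bG^F)$ of Theorem~\ref{thm:cond}(2). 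Since $\cE(\bG^F,p')$ is $\tbG^FD$-stable, this yields a $D$-stable $\tbG^F$-transversal in $\IBr(\bG^F)$. Its members $\varphi_0$ correspond to characters $\chi_0$ that extend to $\bG^FD_{\chi_0}$, and $D_{\varphi_0}=D_{\chi_0}$.

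\emph{Extension.} By \cite[Thm.~(8.29)]{Na98} it is enough to extend $\varphi_0$ to $\bG^FH_a$ for a Sylow $a$-subgroup $H_a$ of $D_{\varphi_0}$, for each prime $a\ne p$. The point is that an equivariant bijection alone does not let us transfer extendibility from $\chi_0$ to $\varphi_0$. What we need, as in the proof of Proposition~\ref{thm6.10}, is a $p'$ decomposition number $d_{\chi,\varphi}$ linking the two characters, so that \cite[Cor.~5]{Fe26} (or \cite[Thm.~2]{Fe26} when every ordinary character extends to its stabilizer in $\bG^FE_a$) applies.

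\begin{itemize}
\item For odd $a$, we use \cite[Lemma~6.2]{Sp25}: all $\chi$ extend to their stabilizers in $\bG^FE_a$ with $E_a\in\Syl_a(D)$. Then \cite[Thm.~2]{Fe26} gives that $\varphi_0$ extends to $\bG^F(E_a)_{\varphi_0}$.
\item For $a=2$, we copy the argument of Proposition~\ref{thm6.10}. Take $S\in\Syl_2(D)$ and $P=\Z(\bG^F)\rtimes S$. Lemma~\ref{lem6.6} applied to the unimodular matrix $\Dec_{\cB,\cC}$ gives a $P$-equivariant bijection $F_{odd}$ with odd diagonal decomposition numbers. Pulling back the transversal $T\cap\cB$ gives an $S$-stable $\Z(\bG^F)$-transversal whose members extend to $\bG^FS_\varphi$, by \cite[Cor.~5]{Fe26}. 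We then upgrade it to a $D$-stable transversal as in Lemma~\ref{lem6.4}, i.e.\ \cite[Thm.~1.2]{Bu25}.
\end{itemize}

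For most types this upgrade is trivial. Either $\Z(\bG^F)$ is cyclic of order $\le 2$, or $D$ has cyclic Sylow $2$-subgroups, in which case extension to $\bG^FH_2$ is automatic.

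The main obstacle is to arrange that the single transversal $T_\cC$ works for all primes $a$ simultaneously. We resolve this by defining $T_\cC$ through the $2$-part construction, which is the only delicate one. The odd primes need no choice of transversal, since \cite[Lemma~6.2]{Sp25} applies to every character. The remaining cases to watch are $\ty E_6$ with $|\Z(\bG^F)|=3$ and the graph automorphism, and type $\ty B/\ty C$ with $|\Z|=2$. In both, $|\mathcal O|\le3$, so the proof of Lemma~\ref{lem6.4} goes through: orbits of odd length $\le 3$ under $D_\mathcal O$ still have a fixed point, or are already fixed. Any Case~2 situation is handled by the analogue of Lemma~\ref{lem6.7}, namely \cite[\S7]{Sp25}. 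Finally, $T_\cC$ gives the required $\varphi_0$, as in the last paragraph of the proof of Proposition~\ref{thm6.10}.
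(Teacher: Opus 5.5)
You have a genuine gap, and it sits in the one case that needs work.

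Your stabilizer step is correct and is what the paper does (Corollary~\ref{cor:A-inf-1}). Your cyclic-Sylow remark actually settles almost all the extension cases, and you should have pushed it to the end. Outside types $\ty A$ and $\ty D$, the group $D$ is cyclic unless $\bG^F$ is of untwisted type $\ty E_6$. In every other case an invariant Brauer character extends automatically, so the $\ty B/\ty C$ case you flag needs nothing. The only case with content is split $\ty E_6(q)$ with $|\Z(\bG^F)|=3$ and $D_{\varphi_0}$ having non-cyclic Sylow $2$-subgroups, and there your $2$-part construction fails:
\begin{itemize}
\item Lemma~\ref{lem6.6} needs the acting group to be a $2$-group. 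Its proof produces an odd number of admissible bijections and uses that a $2$-group acting on a set of odd size has a fixed point. Here $P=\Z(\bG^F)\rtimes S$ has order $3|S|$, so you do not get a $\Z(\bG^F)$-equivariant $F_{odd}$.
\item Applying the lemma to $S$ alone gives only $S$-equivariance. Then $F_{odd}^{-1}(T\cap\cB)$ need not be a $\tbG^F$-transversal at all.
\item Your appeal to the proof of Lemma~\ref{lem6.4} does not transfer. If $|\mathcal O|=3$ and $|D_{\mathcal O}\varphi|=3$, that orbit is all of $\mathcal O$ and no fixed point is left over; you would have to import one from the stabilizer step.
\item Lemma~\ref{lem6.7} and \cite[\S7]{Sp25} are statements about type $\ty D$. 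They give no extendibility result for $\ty E_6$.
\end{itemize}

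What is missing is an extendibility input specific to $\ty E_6$. For instance, it would suffice to know the following for $2$-subgroups $R\le D$: every $R$-invariant $\chi\in\Irr(\bG^F)$, not only the members of the $A(\infty)$-transversal, extends to $\bG^FR$. Then Lemma~\ref{lem6.6} applied to $R\in\Syl_2(D_{\varphi_0})$ alone gives an $R$-invariant $\chi$ with odd $d_{\chi,\varphi_0}$, and \cite[Cor.~5]{Fe26} finishes. The paper avoids the transversal machinery entirely here. After Corollary~\ref{cor:A-inf-1} it notes that $D$ is cyclic outside $\ty E_6$, and it quotes \cite[Remark~9]{Fe26} for extendibility of Brauer characters in type $\ty E_6$.
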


\begin{proof}
Thanks to \cite[Thm.~8.1]{FLZ21} and Proposition~\ref{thm6.10}, we may assume $\bG^F$ is not of type $\ty A_n$, $^2\ty A_n$ or $\ty D_n$.
In particular, $p$ does not divide the order of $(\Z(\bG)/\Z^\circ(\bG))^F$.
By Corollary~\ref{cor:A-inf-1}, if $D$ is cyclic, then the assertion holds automatically.
Therefore, we can assume further that $D$ is not cyclic, and thus $\bG^F$ is of type $\ty E_6$, in which situation,
the extendibility of irreducible Brauer characters follows by \cite[Remark~9]{Fe26}.
Then we complete the proof.
\end{proof}

Now we establish the inductive Brauer--Glauberman condition for simple groups of Lie type at good non-defining primes.

\begin{thm}\label{thm:good-primes}
Suppose that $\bG$ is a simple algebraic group of simply-connected type over $\overline\FF_q$ and $F:\bG\to\bG$ is a Frobenius endomorphism endowing $\bG$ with an $\FF_q$-structure.   
Assume that $p$ is a prime good for $\bG$ and $p\nmid q$.
If $S:=\bG^F/\Z(\bG^F)$ is simple and $p\mid |S|$, then the inductive Brauer--Glauberman condition holds for $S$ at the prime $p$.
\end{thm}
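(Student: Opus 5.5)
We derive this from the criterion of Feng and Späth \cite{FS23} for the inductive Brauer--Glauberman condition for simple groups of Lie type. As we recall it, the criterion for $S=\bG^F/\Z(\bG^F)$ in non-defining characteristic $p$ has two inputs.
\begin{enumerate}[\rm(i)]
\item The Brauer $A(\infty)$ condition for $\bG^F$, i.e.\ Condition~\ref{cond:1.1} in the form of Definition~\ref{defn:BrauerAinf}.
\item An equivariance statement for the regular embedding, namely a $(\Lin_{p'}(\tbG^F/\bG^F)\rtimes D)$-equivariant bijection between a basic set of ordinary characters of $\tbG^F$ and $\IBr(\tbG^F)$. Possibly this is needed in a blockwise form, together with compatibility with central characters, so that one can pass from $\tbG^F$ to $\bG^F$.
\end{enumerate}
So the plan is to check the hypotheses of that criterion and then invoke it.

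Step 1 gives (i). The preceding theorem (Theorem~\ref{mainthm-1} in restated form) supplies exactly the Brauer $A(\infty)$ condition, since $p$ is good and non-defining. Step 2 gives (ii). Theorem~\ref{thm:cond}(1) provides a blockwise $(\Lin_{p'}(\tbG^F/\bG^F)\rtimes D)$-equivariant bijection between $\cE(\tbG^F,p')$ and $\IBr(\tbG^F)$, and this needs only that $p$ is good. By Theorem~\ref{thm:basic-set}, $\cE(\tbG^F,p')$ is a basic set, since $\Z(\tbG)$ is connected. A bijection obtained block by block automatically respects central characters: characters in a common $p$-block have central characters agreeing on $p'$-elements of $\Z(\tbG^F)$, and restriction to the $p'$-part determines the Brauer central character. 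Step 3 handles the ordinary-character input of the criterion, which as I understand it is $\tbG^F\rtimes D$-equivariant Jordan decomposition data. This is supplied by Theorem~\ref{ord-Ainfty} together with Lusztig's Jordan decomposition, as in \cite{FS23}.

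Step 4 treats the groups excluded by the hypotheses of \cite{FS23}. Type $\ty A$ is already done in \cite{FS23}. Simple groups with cyclic outer automorphism group are covered by \cite{NST17}. Finitely many exceptional covers and exceptional isomorphisms are also covered by \cite{NST17}, since the groups involved are either not of Lie type or are of Lie type in defining characteristic. For the remaining groups we apply the criterion and conclude.

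The main obstacle is in Step 2 for type $\ty D_n$ and $\ty E_6$, where $p$ may divide $|(\Z(\bG)/\Z^\circ(\bG))^F|$. This happens for $p=3$ in $\ty E_6$, but then $p$ is bad, so it is excluded here. For $\ty D_n$ the order is a power of $2$ while $p$ is odd, so there is no problem. I would therefore check carefully that the criterion of \cite{FS23} only needs the $\tbG^F$-level bijection of Theorem~\ref{thm:cond}(1), plus the $A(\infty)$ condition for $\bG^F$, and does not need a $\bG^F$-level equivariant bijection. For $\ty D_n$, Proposition~\ref{thm6.10} provides the $\bG^F$-level transversal directly, so that case does not depend on Theorem~\ref{thm:cond}(2).
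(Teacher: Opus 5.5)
Your main argument is the paper's. By \cite[Cor.~6.4]{FS23}, the unitriangularity in the criterion of \cite{FS23} is only used to obtain conditions (ii)--(iv) of \cite[Thm.~4.9]{FS23}. Conditions (ii) and (iii) are the Brauer $A(\infty)$ condition (Theorem~\ref{mainthm-1}). Condition (iv) only needs the $(\Lin_{p'}(\tbG^F/\bG^F)\rtimes D)$-equivariant bijection $\cE(\tbG^F,p')\to\IBr(\tbG^F)$ of Theorem~\ref{thm:cond}(1). You are right that no $\bG^F$-level bijection is required.

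The step that fails is your treatment of exceptional covers in Step~4. Your justification is that the groups involved are either not of Lie type or in defining characteristic. That is false, as $S=\ty D_4(2)$ and $S={}^2\ty E_6(2)$ show:
\begin{enumerate}[\rm(1)]
\item Both have an exceptional Schur multiplier. So $\bG^F$ is not the universal covering group of $S$, and the \cite{FS23} criterion, which is formulated for the universal cover as in \cite{SV16}, cannot be applied with $G=\bG^F$.
\item Both have $\Out(S)\cong\fS_3$, which is not cyclic.
\item Both fall under the theorem in non-defining characteristic at good primes dividing $|S|$: $p\in\{3,5,7\}$ for $\ty D_4(2)$ and $p\in\{5,7,11,13,17,19\}$ for ${}^2\ty E_6(2)$.
\end{enumerate}
None of the results of \cite{NST17} covers these pairs $(S,p)$, so your proof as written leaves them open.

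The paper removes type $\ty A$ and the groups with exceptional Schur multiplier at the outset by \cite[Cor.~4.8 and Thm.~5.8]{FS23}, not by \cite{NST17}. It uses the cyclic-$\Out$ result of \cite{NST17} only to exclude ${}^3\ty D_4$. If you replace your appeal to \cite{NST17} for exceptional covers with that citation, your argument agrees with the paper's.
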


\begin{proof}
By \cite[Cor.~4.8 and Thm.~5.8]{FS23}, we assume that $\bG$ is not of type $\ty A$, and $S$ has no exceptional Schur multiplier. Since finite simple groups with cyclic outer automorphism group satisfy the inductive Brauer--Glauberman condition, we may assume $S$ is not of Lie type $^3\ty{D}_4$.
To verify the inductive Brauer--Glauberman condition for $S$, it suffices to verify Hypothesis 5.5 of \cite{FS23}.
By \cite[Cor.~6.4]{FS23}, this can be transfer to the unitriangular assumption of decomposition matrices in Hypothesis 6.2 of \cite{FS23}.
In fact, the unitriangularity in Hypothesis 6.2 of \cite{FS23} is to prove conditions (ii), (iii) and (iv) of \cite[Thm.~4.9]{FS23}.
We note that (ii) and (iii) are just Brauer $A(\infty)$ condition.
In the proof of \cite[Thm.~5.7]{FS23}, the unitriangularity is used to prove the condition (iv) of \cite[Thm.~4.9]{FS23} ;
more precisely, what we need here is a $(\Lin_{p'}(\tbG^F/\bG^F)\rtimes D)$-equivariant bijection between $\cE(\tbG^F,p')$ and $\IBr(\tbG^F)$, where $\tbG$ and $F$ are defined as before.
Since $p$ is good, Theorem~\ref{thm:cond} and Theorem~\ref{mainthm-1} imply the  inductive Brauer--Glauberman condition.    
\end{proof}

In particular, combing \cite[Thm.~6.1]{FS23}, we complete the verification of the inductive Brauer--Glauberman condition for all simple groups of type $\ty C$.

Finally, we prove Navarro's Conjecture~\ref{conj:coprime} for primes greater than~3.

\begin{proof}[Proof of Theorem~\ref{mainthm-2}]
In \cite[Thm.~A]{FR21}, part (ii) of Definition~6.1 of \cite{SV16} is verified.
We use the criterion from \cite{FS23} for the inductive Brauer--Glauberman condition.
By \cite[Thm.~5.11]{FS23}, we only need to consider simple groups of Lie type $\ty B_n$, $\ty C_n$, $\ty D_n$, $^2\ty D_n$, $\ty E_6$, $^2\ty E_6$, and $\ty E_7$, where $p$ is a non-defining characteristic.
Since $p\ge 5$, we have that $p$ is a good prime.
Therefore, we can complete the proof by Theorem~\ref{thm:good-primes}.
\end{proof}


\end{document}